\def\ls{\lesssim}
\def\gs{\gtrsim}
\def\XXint#1#2#3{{\setbox0=\hbox{$#1{#2#3}{\int}$ }
\vcenter{\hbox{$#2#3$ }}\kern-.6\wd0}}
\def\({\left(}
\def \){ \right)}
\newtheorem{theorem}{Theorem}[section]
\newtheorem{lemma}[theorem]{Lemma}
\newtheorem{corollary}[theorem]{Corollary}
\newtheorem{proposition}[theorem]{Proposition}
\theoremstyle{definition}
\renewcommand{\appendix}{\par
   \setcounter{section}{0}%
   \setcounter{subsection}{0}%
   \setcounter{subsubsection}{0}%
   \gdef\thesection{\@Alph\c@section}%
   \gdef\thesubsection{\@Alph\c@section.\@arabic\c@subsection}%
   \gdef\theHsection{\@Alph\c@section.}%
   \gdef\theHsubsection{\@Alph\c@section.\@arabic\c@subsection}%
   \csname appendixmore\endcsname
 }
\numberwithin{equation}{section}
\begin{document}

\arraycolsep=1pt

\title{\bf\Large $L^2$ Boundedness of Hilbert Transforms along Variable Flat Curves
\footnotetext{\hspace{-0.35cm} 2010 {\it
Mathematics Subject Classification}. Primary 42B20;
Secondary 42B25.
\endgraf {\it Key words and phrases.} Hilbert transform, variable flat curve.
\endgraf This work was partially supported by
 NSFC-DFG  (Grant Nos.
11761131002).}}
\author{Junfeng Li and Haixia Yu\footnote{Corresponding author.}}
\date{}
\maketitle

\vspace{-0.7cm}

\begin{center}
\begin{minipage}{13cm}
{\small {\bf Abstract}\quad In this paper, the $L^2$ boundedness of the Hilbert transform along variable flat curve $(t,P(x_1)\gamma(t))$
$$H_{P,\gamma}f(x_1,x_2):=\mathrm{p.\,v.}\int_{-\infty}^{\infty}f(x_1-t,x_2-P(x_1)\gamma(t))\,\frac{\textrm{d}t}{t},\quad \forall\, (x_1,x_2)\in\mathbb{R}^2,$$
is studied, where $P$ is a real polynomial on $\mathbb{R}$. A new sufficient condition on the curve $\gamma$ is introduced.
}
\end{minipage}
\end{center}

%\tableofcontents

\section{Introduction}

In this paper we consider the Hilbert transform along variable flat curve $(t,P(x_1)\gamma(t))$
\begin{equation}\label{Hilbert transform} H_{P,\gamma}f(x_1,x_2):=\mathrm{p.\,v.}\int_{-\infty}^{\infty}f(x_1-t,x_2-P(x_1)\gamma(t))\,\frac{\textrm{d}t}{t},\quad \forall\, (x_1,x_2)\in\mathbb{R}^2,
\end{equation}
where $P:\ \mathbb{R} \rightarrow \mathbb{R}$ is a real polynomial. Bennett in \cite{BJ} pointed out that the prime interest in the general curve study for the operator above is to include some $\gamma$ which vanish to infinite order at the origin. One will see that the curve \eqref{eq:ly} is such kind of curve and satisfies all of conditions in Theorem \ref{main result}. Therefore, our results satisfy Bennett's concerns. The interest in this operator can be traced back to the outstanding Stein conjecture. For any measurable map $v:\ \mathbb R^2\rightarrow \left\{x\in \mathbb R^2:\ |x|=1\right\}$ and any Schwartz function $f$ on $\mathbb R^2$, define
\begin{equation}\label{H}
H_{v,\varepsilon}f(x):=\mathrm{p.\,v.}\int_{-\varepsilon}^{\varepsilon} f(x-v(x)t)\,\frac{\textrm{d}t}{t},\quad \forall\, x\in\mathbb{R}^2.
\end{equation}
In \cite{Stein}, Stein conjectured that $H_{v,\varepsilon}$ maps $L^2$ into weak $L^2$ whenever $v$ is a Lipschitz function with $\|v\|_{\textrm{Lip}}\approx\epsilon^{-1}$. Christ et al. in \cite{CNSW} established the $L^p$ boundedness of $H_{v,\epsilon}$ under the condition that $v$ is $C^\infty$ with extra curvature condition, where $p\in(1,\infty)$. To connect our operator with $H_{v,\epsilon}$ in \eqref{H}, let us ignore the cut off and consider
  $$H_vf(x):=\mathrm{p.\,v.}\int_{-\infty}^{\infty}f(x-v(x)t)\,\frac{\textrm{d}t}{t},\quad \forall\, x\in\mathbb{R}^2.$$
In 2006, Lacey and Li \cite{LL1} considered the case that $H_v$ applied on a dyadic piece $P_kf$ with $k\in \mathbb Z$. Here $P_k$ denotes the Littlewood-Paley projector operator. They showed that $H_{v}P_k$ is $L^p$ bounded for any measurable vector $v$ and $p\in [2,\infty)$ and the bound is independent of $k$. Later, in \cite{LL2}, by asking for $C^{1+\alpha}(\alpha>0)$ smooth of the vector $v$, Lecay and Li obtained the $L^2$ boundedness of $H_{v}$. More recently, Stein and Street \cite{SS} established the $L^p$ boundedness of $H_{v}$ with analytic vector in a more general context, where $p\in (1,\infty)$. For more details of Stein conjecture we refer the reader to the nice memoir \cite{LL2}.

Beside the direct results on the Stein conjecture, a easier case is $v(x_1,x_2)=(1,u(x_1))$. Let us define
\begin{equation}\label{Hilbert o}
H_{u,\gamma}f(x_1,x_2):=\mathrm{p.\,v.}\int_{-\infty}^{\infty}f(x_1-t,x_2-u(x_1)\gamma(t))\,\frac{\textrm{d}t}{t},\quad \forall\, (x_1,x_2)\in\mathbb{R}^2.
\end{equation}
For $\gamma(t):=t$, based on Lacey and Li's works \cite{LL1,LL2}, Bateman in \cite{B1} proved that
for any measurable function $u$, $H_{u,\gamma} P_k$ is bounded on $L^p(\mathbb{R}^2)$ for any given $p\in (1,\infty)$ and uniformly for all $k\in\mathbb Z$, where $P_k$ denotes the Littlewood-Paley projection operator in the second variable, and in \cite{B2}, Bateman and Thiele obtained that $H_{u,\gamma}$ is bounded on $L^p(\mathbb{R}^2)$ for any given $p\in (\frac{3}{2},\infty)$. Moreover, let $\gamma$ be $|t|^\alpha$ or $\textrm{sgn}(t)|t|^\alpha$, $\alpha>0$, $\alpha\neq1$, Guo in \cite{G1,G2} obtained the $L^p$ boundedness of $H_{u,\gamma}$ for any given $p\in (1,\infty)$.

In this paper we consider a more special case say that $u$ is a polynomial, i.\ e. $u=P$. Then $H_{u,\gamma}$ in \eqref{Hilbert o} becomes the operator $H_{P,\gamma}$ in \eqref{Hilbert transform}. In this case, an induction on the degree of polynomial is a robust approach.  The start point of the induction is the $L^p$ boundedness of the following Hilbert transform along general curve $(t,\gamma(t))$
\begin{equation}\label{Hilbert operator}
H_{\gamma}f(x_1,x_2):=\mathrm{p.\,v.}\int_{-\infty}^{\infty}f(x_1-t,x_2-\gamma(t))\,\frac{\textrm{d}t}{t},\quad \forall\, (x_1,x_2)\in\mathbb{R}^2.
\end{equation}
This operator has independent interests, which is another motivation of this paper. A fundamental question here is to establish the $L^p$ boundedness of \eqref{Hilbert operator} under some general conditions of the curve $\gamma$. There are enumerate literatures on this problem; see, for example, \cite{BN,Chr,CR,CVWW,CZ,NVWW,VWW,W}. As Stein and Waigner pointed out in \cite{SW} that the curvature of the considered curve plays a crucial role in this project. In the same paper, Stein and Waigner showed that if $\gamma$ is well-curved\footnote{We refer the reader to P.1240 in \cite{SW} for the definition of the well-curved curve.} then $H_{\gamma}$ is bounded on $L^p(\mathbb{R}^2)$ for any given $p\in (1,\infty)$. In \cite{CCCD}, the well-curved condition was released to $\gamma\in C^{2}(0,\infty)$ is an odd or even, convex curve, $\gamma(0)=\gamma'(0)=0$, and satisfies the following double condition:
\begin{center}
There exists $\lambda\in(1,\infty)$ so that $\gamma'(\lambda t)\geq2\gamma'( t)$ for any $t\in (0,\infty)$.  \quad(D)
\end{center}
Let $h(t)=t\gamma'(t)-\gamma(t)$. In \cite{CCVWW} condition (D) was replaced by the following infinitesimally doubling condition:
\begin{center}
There exists $\varepsilon_0\in(0,\infty)$ so that $ h'(t)\geq \varepsilon_0\frac{h(t)}{t}$ for any $t\in (0,\infty)$. \quad(ID)
\end{center}
There are more general curves to guarantee the $L^p$ boundedness of \eqref{Hilbert operator} for any given $p\in (1,\infty)$, see \cite{ZS}. We satisfy ourselves to recall the above (D) or (ID).

We can now state our main result on the boundedness of $H_{P,\gamma}$ in \eqref{Hilbert transform}.
\begin{theorem}\label{main result}Let $P:\ \mathbb{R} \rightarrow \mathbb{R}$ be a real polynomial of degree $n$, and $\gamma\in C^{2}(\mathbb{R})$ be either odd or even, convex curve on $(0,\infty)$, and satisfying
\begin{enumerate}
  \item[\rm(i)] $\gamma(0)=\gamma'(0)=0$,
  \item[\rm(ii)] $\frac{\gamma''(t)}{\gamma'(t)}$ is decreasing on $(0,\infty)$,
  \item[\rm(iii)] There exists a positive constant $C_1$ such that $\frac{t\gamma''(t)}{\gamma'(t)}\geq C_1$ for any $t\in (0,\infty)$,
  \item[\rm(iv)] $\gamma''(t)$ is monotone on $(0,\infty)$.
\end{enumerate}
Then the Hilbert transform $H_{P,\gamma}$ is bounded on $L^2(\mathbb{R}^{2})$ with a bound that can be taken to be independent of the coefficients of $P$ and dependent only on $n$ and $C_1$.
\end{theorem}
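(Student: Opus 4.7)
The plan is to take the Fourier transform in the $x_2$ variable and reduce the $L^2(\mathbb{R}^2)$ boundedness of $H_{P,\gamma}$ to a uniform-in-$\lambda\in\mathbb{R}$ $L^2(\mathbb{R})$ bound for the one-parameter family of one-dimensional oscillatory operators
\[
T_\lambda g(x):=\mathrm{p.\,v.}\int_{\mathbb{R}} g(x-t)\,e^{-i\lambda P(x)\gamma(t)}\,\frac{\mathrm{d}t}{t}.
\]
Plancherel in $x_2$ gives $\|H_{P,\gamma}\|_{L^2\to L^2}=\sup_{\lambda\in\mathbb{R}}\|T_\lambda\|_{L^2\to L^2}$, so the task becomes to bound $\|T_\lambda\|_{L^2\to L^2}$ by a constant $C(n,C_1)$ independent of $\lambda$ and of the coefficients of $P$.

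I would prove this by induction on $n=\deg P$. When $n=0$, $P\equiv c$ is constant and $T_\lambda$ is a convolution operator whose multiplier agrees (after a trivial rescaling) with the Fourier multiplier of the Hilbert transform $H_\gamma$ along $(t,\gamma(t))$; since the hypotheses (i)--(iv) are designed so that $\gamma$ satisfies the convexity and infinitesimally-doubling-type framework of \cite{CCCD,CCVWW} (in particular (iii) yields the relevant lower bound on $t\gamma''/\gamma'$), the $L^2$ boundedness of $H_\gamma$, and hence of $T_\lambda$, holds uniformly in $c$. For the inductive step with $\deg P=n\geq 1$, write $P(x)=a_n x^n+Q(x)$, $\deg Q\leq n-1$ (the case $a_n=0$ is already covered by the induction hypothesis), introduce a smooth dyadic partition $\sum_{j\in\mathbb{Z}}\psi_j\equiv 1$ with $\mathrm{supp}\,\psi_j\subset\{|t|\sim 2^j\}$, and decompose $T_\lambda=\sum_j T_\lambda^{(j)}$. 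For each $j$, partition the $x$-axis into the low-phase set $E_j:=\{x\in\mathbb{R}:|\lambda a_n x^n\gamma(2^j)|\leq 1\}$ and its complement $E_j^c$.

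On $E_j^c$, where the leading oscillation $\lambda a_n x^n\gamma(t)$ is strong, I would run a van der Corput / stationary-phase argument in $t$, using (ii) and (iv) to control $\gamma'$ and $\gamma''$ on $|t|\sim 2^j$, and (iii) to compare $\gamma'(2^j)\cdot 2^j$ with $\gamma(2^j)$; this yields pointwise kernel decay that is summable in $j$. On $E_j$, I would factor $e^{-i\lambda P(x)\gamma(t)}=e^{-i\lambda Q(x)\gamma(t)}\cdot e^{-i\lambda a_n x^n\gamma(t)}$ and Taylor-expand the second factor, which is admissible because $|\lambda a_n x^n\gamma(t)|\lesssim 1$ on $E_j\times\mathrm{supp}\,\psi_j$; the $k$-th term of the expansion produces a factor $(-i\lambda a_n x^n)^k/k!$ times an oscillatory integral whose phase involves only $Q(x)$ (of strictly smaller degree), to which the induction hypothesis applies, and the factorial denominator secures absolute convergence of the series in $k$. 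The pieces are finally assembled via a Cotlar--Stein almost-orthogonality argument across the scales $j$. The principal obstacle is that the flatness of $\gamma$ admits no monomial scaling with which to normalize the leading coefficient $a_n$, so conditions (ii)--(iv) must be invoked as quantitative substitutes for this missing homogeneity at each dyadic level; the genuinely technical part is to ensure that the Cotlar--Stein estimates between neighboring $j$-pieces and the iterated induction bounds are carried out with constants depending only on $n$ and $C_1$, and not on $\lambda$ or the coefficients of $P$.
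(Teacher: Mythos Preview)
Your reduction via Plancherel in $x_2$, the induction on $\deg P$, and the base case $n=0$ (using that (i)--(iii) imply the doubling/infinitesimally-doubling hypotheses of \cite{CCCD,CCVWW}) all match the paper. The inductive step, however, is where the proposal diverges from the paper and, as written, has a genuine gap.

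The difficulty is that your splitting set $E_j=\{x:|\lambda a_n x^n\gamma(2^j)|\le 1\}$ tracks only the leading monomial, whereas the phase is $\lambda P(x)\gamma(t)$. On $E_j^c$ the full polynomial $P(x)$ may still vanish (the lower-order terms of $P$ can cancel $a_n x^n$), so a van der Corput estimate in $t$---which would produce decay governed by $|\lambda P(x)|$, not $|\lambda a_n x^n|$---gives nothing there. Even where $P(x)\neq 0$, van der Corput in $t$ only bounds the scalar integral $\int e^{-i\lambda P(x)\gamma(t)}\psi_j(t)\,dt/t$ pointwise; it does not by itself bound the $L^2\to L^2$ norm of $g\mapsto\int g(x-t)e^{-i\lambda P(x)\gamma(t)}\psi_j(t)\,dt/t$, since $g(x-t)$ is not a smooth amplitude. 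On the $E_j$ side, your Taylor expansion of $e^{-i\lambda a_n x^n\gamma(t)}$ produces, at each order $k$, operators with amplitude $\gamma(t)^k\psi_j(t)/t$ and phase $\lambda Q(x)\gamma(t)$; these are not the operators covered by the induction hypothesis, and since the sets $E_j$ vary with $j$ you cannot resum in $j$ to recover the full lower-degree operator. Nothing in the sketch provides summability in $j$ for the $E_j$ pieces, and the Cotlar--Stein remark does not address this because no almost-orthogonality between the $\chi_{E_j}T_\lambda^{(j)}$ is supplied.

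The paper proceeds differently. After a rescaling that makes $P$ monic and normalizes $\tilde\gamma(1)=1$, the local piece $|x-y|\le 1$ is handled by Bennett's argument \cite{BJ} (this is where the induction hypothesis is actually used). For the dyadic pieces $2^k\le|x-y|\le 2^{k+1}$, the critical set is
\[
E_k=\Bigl\{x:\ \Bigl|\tfrac{\tilde P_k(x)}{\tilde P_k'(x)}\Bigr|\le \tfrac{4}{C_1}\ \text{and}\ \Bigl(\tfrac{\tilde P_k}{\tilde P_k'}\Bigr)'(x)\le \tfrac{1}{8n}\Bigr\},
\]
defined through the \emph{logarithmic derivative of the full rescaled polynomial}, not the leading monomial. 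The key structural input is Lemma~\ref{le2.4}: $\sum_{k\ge 0}|E_k|^\alpha\lesssim_{n,C_1}1$, which disposes of the $E_k$ pieces by interpolation. On $E_k^{\complement}$ the paper runs a $TT^\ast$ argument; the kernel of $\tilde{\mathbb S}_{kb}^\ast\tilde{\mathbb S}_{kb}$ is an oscillatory integral in a new variable $z$, and the decisive estimate (Proposition~\ref{pro:4.1}) bounds this via an integration-by-parts scheme in $z$ in which conditions (ii)--(iv) on $\gamma$ enter through Lemma~\ref{le2.2} and the monotonicity of $\gamma''$. This yields geometric decay $2^{-ck}$ in the operator norms, summable without any Cotlar--Stein machinery.
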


Throughout this paper, we always use $C$ to denote a positive
constant, independent of the main parameters involved, but whose
value may differ from line to line. We use $C_{(n)}$ to denote a positive constant depending on the indicated parameters $n$, and also whose
value may differ from line to line. The positive constants with subscripts,
such as $C_1$, do not change in different
occurrences. For two real functions $f$ and $g$, if $f\le Cg$, we then write $f\ls g$ or $g\gs f$;
if $f\ls g\ls f$, we then write $f\approx g$.

There were some results in this topic; see, for example, \cite{CP,CSWW,RS,S}. The condition (i) of Theorem \ref{main result} states that the curve $\gamma$ is somehow flat at zero. Besides this condition, the conditions (ii),(iii) and (iv) in Theorem \ref{main result} are used to describe the curvature conditions of $\gamma$. In \cite{CWW}, Carbery et al. set up the $L^p $ boundedness of $H_{P,\gamma}$ with $P(x_1)=x_1$ for any given $p\in (1,\infty)$. Where the curvature conditions are as follows:
\begin{center}
$\frac{t\gamma''(t)}{\gamma'(t)}$ is decreasing on $(0,\infty)$ and has a positive bounded from below. \quad(CWW)
\end{center}
Under the same conditions, Bennett in \cite{BJ,BJM}, established the $L^2$ boundedness of $H_{P,\gamma}$ for general polynomial $P$. More recently, Chen and Zhu \cite{CZx} obtained the $L^2$ boundedness of $H_{P,\gamma}$ by asking the curvature conditions as
\begin{center}
$(\frac{\gamma''}{\gamma'})'(t)\leq -\frac{\lambda}{t^2}$ for any $t\in (0,\infty)$ and some positive constant $\lambda$. \quad(CZ)
\end{center}

This paper is  organized as following. We devote section 2 to make clear that our curvature conditions are not stronger than (CWW) or (CZ). Actually, the conditions (ii) and (iii) in Theorem \ref{main result} are implied by (CWW) and (CZ). Our condition (iv) in Theorem \ref{main result} is not too strong. We will give a example which does not satisfies (CWW) or (CZ) but verifies our conditions. Section 3 contains some primary lemmas which will be used in the proof of the main result. In section 4, we will give the proof of Theorem \ref{main result}.

\section{Curve $\gamma$}

Since $\gamma$ is convex on $(0,\infty)$ and belongs to $C^2(0,\infty)$, thus $$\gamma''(t)\geq0,\quad \forall\ t\in(0,\infty).$$
It means that $\gamma'$ is increasing on $(0,\infty)$. This, combined with the fact that $\gamma'(0)=0$, further implies that
$$\gamma'(t)\geq0,\quad \forall\ t\in(0,\infty).$$
Even we pursue the general curve, the homogeneous curve $\gamma(t)=t^{\alpha}, t\in(0,\infty)$ with $\alpha>1$ satisfies all the conditions in Theorem \ref{main result}, for $t\in (-\infty,0]$, $\gamma$ is given by its even or odd property. And the homogeneous curve is the model curve. Follows are some other curves $\gamma$ satisfy our conditions in Theorem \ref{main result}. we here only write the part $t\in [0,\infty)$, and which $\gamma(t)=\pm \gamma(-t)$ for $t\in (-\infty,0]$.
\begin{enumerate}
  \item[(i)] for any $t\in [0,\infty)$, $\gamma(t):=t^2\log(1+t)$,
  \item[(ii)] for any $t\in [0,\infty)$, $\gamma(t):=t^2e^{-\frac{1}{t}}$,
  \item[(iii)] for any $t\in [0,\infty)$, $\gamma(t):=\int_0^t \tau^\alpha\log(1+\tau) \,\textrm{d}\tau$, $\alpha\in[1,\infty)$,
  \item[(iv)] for any $t\in [0,\infty)$, $\gamma(t):=\int_0^t \tau^\alpha e^{-\frac{1}{\tau}} \,\textrm{d}\tau$, $\alpha\in[1,\infty)$,
  \item[(v)] for any $t\in [0,\infty)$, $\gamma(t):=\int_0^t \tau^\alpha \arctan \tau \,\textrm{d}\tau$, $\alpha\in[1,\infty)$.
\end{enumerate}

It is easy to see that (CWW) implies conditions (ii) and (iii). It is also clear that (CZ) implies condition (ii).
Actually in \cite{CZx}, instead of (CZ), Chen and Zhu used a weaker condition
$$\frac{\gamma''(t)}{\gamma'(t)}-\frac{\gamma''(s)}{\gamma'(s)}\geq \frac{\lambda (s-t)^M}{(s+t)^{M+1}},\quad \forall\ 0<t<s,\quad (\text{wCZ})$$
for some positive constants $\lambda$ and $M$. Which is also stronger than the decreasing of $\frac{\gamma''}{\gamma'}$. Mean while, condition (CZ) also implies condition (iii). To see this, let us denote
$$F(t):=\frac{\gamma''(t)}{\gamma'(t)}-\frac{\lambda}{t},\quad \forall\ t\in (0,\infty).$$
Since we know that $\frac{\gamma''}{\gamma'}\geq0$ for any $t\in (0,\infty)$ and $-\frac{\lambda}{t}\rightarrow0$ as $t\rightarrow \infty$. We have $\lim_{t\rightarrow\infty} F(t)\geq0$. By (CZ), we have
$$F'(t)=\Big(\frac{\gamma''}{\gamma'}\Big)'(t)+\frac{\lambda}{t^2}\leq0,\quad \forall\ t\in (0,\infty).$$ Thus we have
$$F(t)\geq0 \Leftrightarrow \frac{t\gamma''(t)}{\gamma'(t)}\geq \lambda,\quad \forall\ t\in (0,\infty).$$

The condition (iv) is our main contribution for this problem. It has already appeared in \cite{NW}. Nagel and Winger proved that under conditions (i),(iii) and (iv), $H_{\gamma}$ in \eqref{Hilbert operator} is bounded on $L^p(\mathbb{R}^{2})$ for $p$ is very close to 2. Actually, if $\gamma''$ is increasing on $(0,\infty)$, then (iv) implies (iii). But if $\gamma''$ is decreasing on $(0,\infty)$, then all the conditions in Theorem \ref{main result} are independent from each other. For given $x,y\in (0,\infty)$, let $0<z-y<z-x<\infty$, we set
$$\varphi(z):=\gamma''(z-x)-\gamma''(z-y).$$
Condition (iv) is used to guarantee that the sign of $\varphi$ changes finite many times on $(0,\infty)$ and the number does not depend on $x,y,z$. There are many different ways to form up this condition, and here we use condition (iv) in the proof of Theorem \ref{main result}.

Now we give an example which does not satisfies (CWW) or (CZ) but verifies our conditions (i),(ii),(iii) and (iv). Thus the conditions in Theorem \ref{main result} are not trivial.
Let
\begin{align}\label{eq:ly}
\gamma(t):=\int_0^t e^\tau e^{-\frac{1}{\tau}}\, \textrm{d}\tau,\quad \forall\ t\in [0,\infty) ,
\end{align}
and
$$\gamma(t):=\pm \gamma(-t),\quad \forall\ t\in (-\infty,0].$$
We calculate
$$\gamma'(t)= e^t e^{-\frac{1}{t}},\quad \forall\ t\in [0,\infty) ,$$
$$\gamma''(t)= e^t e^{-\frac{1}{t}}+e^t e^{-\frac{1}{t}}t^{-2}\geq 0,\quad \forall\ t\in [0,\infty) ,$$
$$\gamma'''(t)= e^t e^{-\frac{1}{t}}\left(1+2t^{-2}-2t^{-3}+t^{-4}\right)=e^t e^{-\frac{1}{t}}\frac{t^4+2((t-\frac{1}{2})^2+\frac{1}{4})}{t^4}\geq 0,\quad \forall\ t\in [0,\infty).$$
So we have $\gamma\in C^{2}(\mathbb{R})$ is either odd or even, convex curve on $(0,\infty)$, and $\gamma''$ is increasing on $(0,\infty)$, and
$$\gamma(0)=\gamma'(0)=0,$$
$$\frac{\gamma''(t)}{\gamma'(t)}=1+\frac{1}{t^2},\quad \forall\ t\in [0,\infty).$$
Therefore, $\frac{\gamma''}{\gamma'}$ is decreasing on $(0,\infty)$ and
$$\frac{t\gamma''(t)}{\gamma'(t)}=t+\frac{1}{t}\geq 2,\quad \forall\ t\in [0,\infty).$$
Then, curve $\gamma$ satisfies the conditions (i),(ii),(iii) and (iv) in Theorem \ref{main result}.

We now show that this curve does not satisfies (CWW). We set
$$\lambda(t):=\frac{t\gamma''(t)}{\gamma'(t)}=t+\frac{1}{t},\quad \forall\ t\in (0,\infty).$$
It is easy to get that
$$\lambda'(t)=1-\frac{1}{t^2},\quad \forall\ t\in (0,\infty).$$
Thus, $\lambda$ is not decreasing on $(0,\infty)$.

For condition (CZ) , we have
$$\left(\frac{\gamma''}{\gamma'}\right)'(t)=-2 t^{-3},\quad \forall\ t\in (0,\infty).$$
Thus, there is no positive number $\lambda$ such that (CZ) is true.

Even for the weaker condition (wCZ). Let $s=2t$, if $\gamma$ satisfies (wCZ), then
$$\frac{1}{t^2}-\frac{1}{4t^2}\geq  \frac{\lambda t^M}{(3t)^{M+1}},\quad \forall\ t\in (0,\infty),$$
thus
$$\frac{3}{4t}\geq \frac{\lambda}{3^{M+1}},\quad \forall\ t\in (0,\infty).$$
But such positive constants $\lambda$ and $M$ can not exist as $t\rightarrow \infty$.

Next, we try to give a geometric explanation of the conditions (ii) and (iii). Condition (ii) is equals to that $\frac{\gamma'}{\gamma''}$ is increasing on $(0,\infty)$. See Figure 1, $A(t-\frac{\gamma'(t)}{\gamma''(t)},0)$ is the intersection between $x$-axis and the tangent line to the curve $(x,y)=(t,\gamma'(t))$, and point $B(t,0)$ is the intersection between $x$-axis and the vertical line to the curve $(x,y)=(t,\gamma'(t))$, then the quantity $\frac{\gamma'}{\gamma''}$ is the distance between point $A$ and point $B$. Condition (ii) said that the distance between $AB$ will grow large as $t$ goes to right. But condition (iii) said that the distance is always less than $\frac{t}{C_1}$. Here $C_1$ is the constant in condition (iii). If we simply set $C_1\geq 1$ then it means that $\gamma'$ is also convex on $(0,\infty)$.

\begin{figure}[htbp]
  \centering
  % Requires \usepackage{graphicx}
  \includegraphics[width=3.5in]{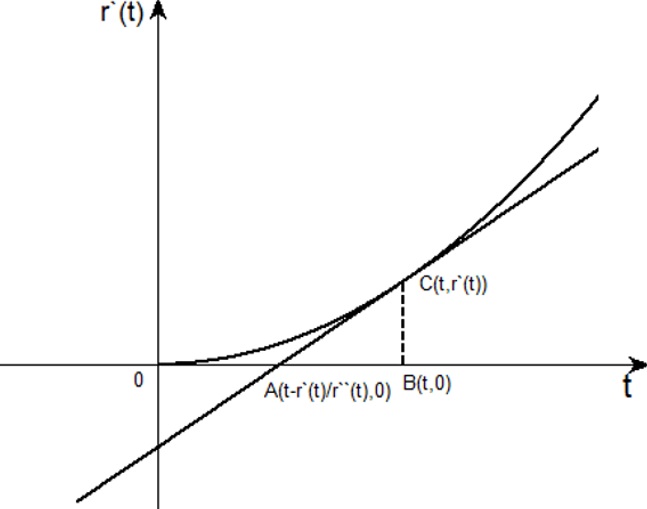}\\
  \caption{$|AB|=\frac{\gamma'(t)}{\gamma''(t)}$}\label{Figure£º1}
\end{figure}

Actually, if $\gamma'$ is also convex on $(0,\infty)$, then we have $\gamma''$ is increasing on $(0,\infty)$. Thus condition (iii) can also be true with $C_1=1$. Thus we have the following corollary:
\begin{corollary}Let $P:\ \mathbb{R} \rightarrow \mathbb{R}$ be a real polynomial of degree $n$, and $\gamma\in C^{2}(\mathbb{R})$ be either odd or even, convex curve on $(0,\infty)$, and satisfying
\begin{enumerate}
  \item $\gamma(0)=\gamma'(0)=0$,
  \item $\frac{\gamma''(t)}{\gamma'(t)}$ is decreasing on $(0,\infty)$,
  \item $\gamma'(t)$ is also convex on $(0,\infty)$.
\end{enumerate}
Then the Hilbert transform $H_{P,\gamma}$ is bounded on $L^2(\mathbb{R}^{2})$ with a bound that can be taken to be independent of the coefficients of $P$ and dependent only on $n$.
\end{corollary}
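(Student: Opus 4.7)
The plan is to deduce the corollary directly from Theorem \ref{main result} by checking that each of its four hypotheses (i)–(iv) follows from the three assumptions of the corollary. Hypotheses (1) and (2) of the corollary are literally conditions (i) and (ii) of the theorem, so nothing needs to be proved for them. For condition (iv), I would observe that since $\gamma\in C^2$, the convexity of $\gamma'$ on $(0,\infty)$ asserted in hypothesis (3) is equivalent to $\gamma''$ being non-decreasing on $(0,\infty)$, which is in particular monotone; this gives (iv) immediately.

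The only substantive step is verifying condition (iii), and I would show that it holds with $C_1 = 1$. Using that $\gamma''$ is non-decreasing on $(0,\infty)$ together with $\gamma'(0)=0$, for any $t\in(0,\infty)$ I have
$$\gamma'(t) \;=\; \int_0^t \gamma''(s)\,\textrm{d}s \;\leq\; t\,\gamma''(t),$$
so that $t\gamma''(t)/\gamma'(t)\geq 1$ wherever $\gamma'(t)\neq 0$. If instead $\gamma'\equiv 0$ on $(0,\infty)$ then $\gamma\equiv 0$ and $H_{P,\gamma}$ reduces to the classical one-variable Hilbert transform in $x_1$, which is trivially $L^2$-bounded; this degenerate case can be dismissed at the outset. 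With all four conditions of Theorem \ref{main result} verified, applying it yields the desired $L^2(\mathbb{R}^2)$ boundedness with a bound depending only on $n$ (and on the absolute constant $C_1=1$), as required.

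Since the argument amounts to a short implication check, there is no real obstacle; the only slightly delicate point is the monotonicity-to-pointwise-inequality passage $\gamma'(t)\leq t\,\gamma''(t)$, which is the simple consequence of $\gamma''$ being non-decreasing and $\gamma'$ vanishing at the origin recorded above.
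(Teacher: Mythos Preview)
Your proposal is correct and follows exactly the paper's own argument: the paper notes just before the corollary that if $\gamma'$ is convex then $\gamma''$ is increasing (giving condition (iv)), and that this in turn yields condition (iii) with $C_1=1$, after which Theorem~\ref{main result} applies. Your write-up is in fact slightly more careful than the paper's, spelling out the integral inequality $\gamma'(t)=\int_0^t\gamma''(s)\,\textrm{d}s\le t\gamma''(t)$ and disposing of the degenerate case $\gamma\equiv 0$.
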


\section{Some Lemmas}

In this section, we collect several lemmas which will be used in the proof of Theorem \ref{main result}.

\begin{lemma}\label{le2.1} The conditions of Theorem \ref{main result} imply the double condition $(\textrm{D})$ and infinitesimally doubling condition $(\textrm{ID})$.
\end{lemma}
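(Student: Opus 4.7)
The plan is to derive both the doubling condition (D) and the infinitesimally doubling condition (ID) directly from condition (iii), using only the boundary values supplied by (i); the monotonicity hypotheses (ii) and (iv) will not be needed for this particular lemma. The key observation is that (iii) is essentially a pointwise lower bound on the logarithmic derivative of $\gamma'$, which upgrades readily to both a multiplicative and an additive doubling statement.

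For (D), I would rewrite condition (iii) as $(\log \gamma'(t))' = \gamma''(t)/\gamma'(t) \geq C_1/t$ on $(0,\infty)$, then integrate this differential inequality from $t$ to $\lambda t$ for arbitrary $\lambda>1$, obtaining $\log\gamma'(\lambda t)-\log\gamma'(t)\geq C_1\log\lambda$, equivalently $\gamma'(\lambda t)\geq \lambda^{C_1}\gamma'(t)$. Choosing $\lambda := 2^{1/C_1}$ (or any larger value) then yields $\gamma'(\lambda t)\geq 2\gamma'(t)$, which is exactly (D).

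For (ID), I would first compute from the definition $h(t)=t\gamma'(t)-\gamma(t)$ that $h'(t) = \gamma'(t)+t\gamma''(t)-\gamma'(t) = t\gamma''(t)$. Using the fact already established at the start of Section 2 that $\gamma'\geq 0$ on $(0,\infty)$ (from convexity together with $\gamma'(0)=0$), I note that $\gamma(t)=\int_0^t \gamma'(\tau)\,\textrm{d}\tau\geq 0$, so $h(t)\leq t\gamma'(t)$. Condition (iii) then gives $t\, h'(t)=t^2\gamma''(t)\geq C_1 t\gamma'(t)\geq C_1 h(t)$, which is exactly (ID) with the choice $\varepsilon_0 := C_1$.

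The argument is essentially routine and presents no real obstacle; the only point worth highlighting is that both statements are produced from the single pointwise bound (iii) together with (i), so the lemma's hypotheses are in fact substantially stronger than what is needed to conclude (D) and (ID).
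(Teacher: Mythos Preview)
Your proof is correct and follows essentially the same approach as the paper: both derive (D) and (ID) from condition (iii) together with $\gamma'(0)=0$ and $\gamma\ge 0$, and your argument for (ID) is line-for-line equivalent to the paper's. For (D) the paper integrates $\gamma''$ over $[t,\lambda t]$ and then invokes (iii) and the monotonicity of $\gamma'$, whereas you integrate the logarithmic derivative $(\log\gamma')'\ge C_1/t$ directly; your route is slightly cleaner and yields the sharper choice $\lambda=2^{1/C_1}$ (versus the paper's $\lambda=e^{2/C_1}$), but the underlying idea is the same.
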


\begin{proof} For double condition $(\textrm{D})$, note that $\gamma(0)=\gamma'(0)=0$, $\gamma'$ is increasing on $(0,\infty)$, and there exists a positive constant $C_1$ such that $\frac{t\gamma''(t)}{\gamma'(t)}\geq C_1$ for any $t\in (0,\infty)$. Let $\lambda:=e^{\frac{2}{C_1}}$, then

$$\gamma'(\lambda t)=\int_0^{\lambda t}\gamma''(\tau)\textrm{d}\tau \geq \int_t^{\lambda t}\gamma''(\tau)\textrm{d}\tau
   \geq C_1\int_t^{\lambda t} \frac{\gamma'(\tau)}{\tau}\textrm{d}\tau \geq  C_1\gamma'(t) \log \lambda=2\gamma'(t),\quad \forall\ t\in (0,\infty).$$

To verify the infinitesimally double condition $(\textrm{ID})$, let $\varepsilon_0:=C_1$, then
$$\frac{h'(t)}{\gamma'(t)}= \frac{t\gamma''(t)}{\gamma'(t)}\geq C_1\geq C_1 \frac{t\gamma'(t)-\gamma(t)}{t\gamma'(t)}=C_1 \frac{h(t)}{t\gamma'(t)},\quad \forall\ t\in (0,\infty),$$
thus
$$h'(t)\geq \varepsilon_0 \frac{h(t)}{t},\quad \forall\ t\in (0,\infty).$$
\end{proof}

\begin{lemma}\label{le2.2} Let $\gamma$ be the same as in Theorems \ref{main result}. Let $x,y\in \mathbb{R}$ be fixed. If $1\leq z-y < z-x \leq 2$, then $\Upsilon(z):=\frac{\gamma(\omega 2^k (z-x))-\gamma(\omega 2^k(z-y))}{\omega 2^k\gamma'(\omega 2^k(z-x))-\omega 2^k\gamma'(\omega 2^k(z-y))}$ is increasing on this domain, and $ \Upsilon(z) \leq \frac{2}{C_1}$ uniformly in $\omega$ and $k$ on this domain, where $\omega \in \mathbb{R}$, $\omega>0$ and $k\in \mathbb{Z}$, $k\geq 0$ .
\end{lemma}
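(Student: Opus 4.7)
The plan is to reduce both claims to properties of $F(v):=\gamma(v+h)-\gamma(v)$ for a suitably fixed $h>0$, handle the upper bound by Cauchy's mean value theorem, and handle the monotonicity by log-concavity of $F$ (coming from log-concavity of $\gamma'$ via Pr\'ekopa--Leindler). Set $\beta:=\omega 2^k>0$, $s:=\beta(z-x)$, $t:=\beta(z-y)$, and $h:=s-t=\beta(y-x)>0$; the hypothesis $1\le z-y<z-x\le 2$ forces $\beta\le t<s\le 2\beta$, and
$$\Upsilon(z)=\frac{\gamma(s)-\gamma(t)}{\beta[\gamma'(s)-\gamma'(t)]}=\frac{F(t)}{\beta\,F'(t)},\qquad F'(v)=\gamma'(v+h)-\gamma'(v),$$
with $t=\beta(z-y)$ a strictly increasing linear function of $z$ while $h$ stays constant. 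For the upper bound, Cauchy's mean value theorem applied to $(\gamma,\gamma')$ on $[t,s]$ produces some $\zeta\in(t,s)\subset(\beta,2\beta)$ with $\frac{\gamma(s)-\gamma(t)}{\gamma'(s)-\gamma'(t)}=\frac{\gamma'(\zeta)}{\gamma''(\zeta)}$; condition (iii) gives $\frac{\gamma'(\zeta)}{\gamma''(\zeta)}\le\frac{\zeta}{C_1}\le\frac{2\beta}{C_1}$, and dividing by $\beta$ yields the uniform bound $\Upsilon(z)\le 2/C_1$.

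For the monotonicity, since $t$ is strictly increasing in $z$, it suffices to show that $\Phi(v):=F(v)/F'(v)$ is increasing in $v$; a direct calculation identifies $\Phi'(v)\ge 0$ with the inequality $F(v)F''(v)\le F'(v)^2$, i.e., with log-concavity of $F$. Condition (ii) says $(\log\gamma')'=\gamma''/\gamma'$ is decreasing, so $\gamma'$ is log-concave on $(0,\infty)$. Writing $F(v)=\int_0^h\gamma'(v+\sigma)\,d\sigma$, the integrand $(v,\sigma)\mapsto\gamma'(v+\sigma)\mathbf{1}_{[0,h]}(\sigma)$ is jointly log-concave in $(v,\sigma)$ (log-concave $\gamma'$ composed with a linear map, times the indicator of a convex set), and the Pr\'ekopa--Leindler inequality then yields log-concavity of the $v$-marginal $F(v)$, as required.

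I expect the log-concavity of $F$ to be the main obstacle. A purely pointwise attempt --- rewriting $F'(v)^2-F(v)F''(v)$ as a symmetrized double integral and trying to show the integrand is pointwise nonnegative using only condition (ii) --- fails, since the symmetrized kernel $2\gamma''(u)\gamma''(u')-\gamma'(u)\gamma'''(u')-\gamma'(u')\gamma'''(u)$ need not be pointwise nonnegative (one can check this already for $\gamma'(\tau)=\tau^\alpha$ with $\alpha>1$ and $u/u'$ sufficiently extreme); one really needs the integral / convex-geometric flavor of Pr\'ekopa--Leindler to package the cancellation that occurs only after integration.
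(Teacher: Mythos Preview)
Your proof is correct. The upper bound argument via Cauchy's mean value theorem is exactly the paper's, so there is nothing to add there. For the monotonicity, however, you take a genuinely different route from the paper.

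The paper works with $g(s,t):=\dfrac{\gamma(s)-\gamma(t)}{\gamma'(s)-\gamma'(t)}$ as a function of \emph{two} independent variables and shows, by a one-line Cauchy mean value argument together with the monotonicity of $\gamma'/\gamma''$ (equivalent to condition (ii)), that $\partial g/\partial s\ge 0$ and $\partial g/\partial t\ge 0$ separately; since $\Upsilon'(z)=\partial g/\partial s+\partial g/\partial t$, the result follows. Concretely, Cauchy's theorem gives $g(s,t)=\gamma'(\xi)/\gamma''(\xi)$ for some $\xi\in(t,s)$, and then $\partial g/\partial s\ge 0$ reduces to $\gamma'(s)/\gamma''(s)\ge\gamma'(\xi)/\gamma''(\xi)$, which is immediate from condition (ii); the other partial is symmetric. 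This is entirely elementary and in fact proves a bit more than you need (monotonicity in each variable, not just along the diagonal $s-t=\text{const}$).

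Your approach instead freezes $h=s-t$ and reduces to log-concavity of $F(v)=\gamma(v+h)-\gamma(v)$, which you obtain from log-concavity of $\gamma'$ via Pr\'ekopa--Leindler. This is valid, and your remark that a naive pointwise symmetrization of $F'(v)^2-F(v)F''(v)$ fails is well taken. But the paper's argument shows that one can avoid the convex-geometric machinery altogether: the ``integral cancellation'' you package with Pr\'ekopa--Leindler is already captured, more cheaply, by a single application of Cauchy's mean value theorem to the ratio $g(s,t)$. The trade-off is that your route gives a conceptual explanation (log-concavity is preserved under marginals), while the paper's route stays within first-year calculus.
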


\begin{proof} Let $g(s,t):=\frac{\gamma(s)-\gamma(t)}{\gamma'(s)-\gamma'(t)}$ for any $s> t> 0$. By generalized mean value theorem and note that $\frac{\gamma''}{\gamma'}$ is decreasing on $(0,\infty)$, it is easy to see that $\frac{\partial g}{\partial s}\geq 0$ and $\frac{\partial g}{\partial t}\geq 0$. Thus, for fixed $x,y\in \mathbb{R}$, for any $z$ satisfies $1\leq z-y < z-x \leq 2$, we have
$\Upsilon(z)=\frac{1}{\omega 2^k}g(\omega 2^k (z-x),\omega 2^k(z-y))$. Therefore, we get $$\Upsilon'(z)=\frac{\partial g}{\partial s}(\omega 2^k (z-x),\omega 2^k(z-y))+\frac{\partial g}{\partial t}(\omega 2^k (z-x),\omega 2^k(z-y))\geq 0.$$
Thus, $\Upsilon$ is increasing uniformly in $\omega$ and $k$ on this domain.

By the generalised mean value theorem, and the fact that $\frac{t\gamma''(t)}{\gamma'(t)}\geq C_1$ for any $t\in (0,\infty)$, we obtain
$$\Upsilon(z)=\frac{\gamma'(\omega 2^k\theta_1)}{\omega 2^k\gamma''(\omega 2^k\theta_1)}=\theta_1\frac{\gamma'(\omega 2^k\theta_1)}{\omega 2^k\theta_1\gamma''(\omega 2^k\theta_1)}\leq \frac{2}{C_1}$$
on this domain, where $1\leq z-y\leq \theta_1 \leq z-x\leq 2$.
\end{proof}

\begin{lemma}\label{le2.3}(Lemma 13 in \cite{BJ}) Let $P$ be a real monic polynomial of degree $n$ $(n>0)$ and of one real variable. Let $U$ be the union of the set of roots of $P'$ and of $P''$ over $\mathbb{R}$. There exists $C>0$ depending only on $n$, such that if $\textrm{dist}(z, U)>\delta$, then $|P'(z)|\geq C \delta^{n-1} $ for any $\delta>0$.
 \end{lemma}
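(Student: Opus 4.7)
My plan is to factor $P'$ into real and complex parts and bound each factor using the hypothesis. Since $P$ is monic of degree $n$, write
\begin{equation*}
P'(z) = n \prod_{i=1}^{k}(z - r_i) \prod_{j=1}^{m}\bigl[(z - \alpha_j)^2 + \beta_j^2\bigr],
\end{equation*}
where $r_1, \ldots, r_k$ are the real roots of $P'$ and $\alpha_j \pm i\beta_j$ (with $\beta_j > 0$) are its complex conjugate root pairs; in particular $k + 2m = n - 1$. The hypothesis $\textrm{dist}(z, U) > \delta$ immediately supplies $|z - r_i| > \delta$ for each $r_i$, so the real factors contribute $\delta^k$. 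It therefore suffices to show that for each complex pair $(\alpha_j, \beta_j)$,
\begin{equation*}
(z - \alpha_j)^2 + \beta_j^2 \geq C_{(n)}\delta^2,
\end{equation*}
whence $|P'(z)| \geq C_{(n)} \delta^{k+2m} = C_{(n)} \delta^{n-1}$.

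The heart of the argument is the following quantitative clustering claim: for every complex root $\alpha + i\beta$ of $P'$ there is a point $u \in U$ with $|u - \alpha| \leq K_n \beta$, where $K_n$ depends only on $n$. Assuming the claim, fix a complex pair. If $\beta \geq \delta/(2K_n)$ then $\beta^2 \geq \delta^2/(4K_n^2)$, and we are done. Otherwise pick $u \in U$ with $|u - \alpha| \leq K_n \beta \leq \delta/2$. Since $|z - u| > \delta$, the triangle inequality yields $|z - \alpha| > \delta/2$, so $(z - \alpha)^2 + \beta^2 > \delta^2/4$.

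To prove the clustering claim, write $P'(z) = [(z - \alpha)^2 + \beta^2]\, R(z)$ for a real polynomial $R$ of degree $n - 3$ and leading coefficient $n$. Differentiating,
\begin{equation*}
P''(z) = 2(z - \alpha) R(z) + [(z - \alpha)^2 + \beta^2]\, R'(z),
\end{equation*}
so $P''(\alpha) = \beta^2 R'(\alpha)$ while $P'''(\alpha) = 2 R(\alpha) + \beta^2 R''(\alpha)$. When $|R(\alpha)|$ is bounded away from zero, comparing these two expressions locates a real zero of $P''$ at distance $\mathrm{O}(\beta^2/|R(\alpha)|)$ from $\alpha$ via a one-step Newton estimate; such a zero belongs to $U$ and provides the desired $u$. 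When $|R(\alpha)|$ is small, $R$ itself must have a root close to $\alpha$: if that root is real, it is already a real root of $P'$, hence lies in $U$; if it is a complex conjugate pair, its real part is near $\alpha$, and one invokes the clustering claim inductively for $R$, which has strictly smaller degree.

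The main obstacle is to carry out this induction quantitatively so that the final constant $K_n$ depends only on $n$ and not on the coefficients of $P$. Because $|R(\alpha)|$ is not a priori controlled, the Newton-step estimate and the inductive step must be combined with a careful pigeonhole on which root of $R$ is closest to $\alpha$, and the translation, reflection, and dilation invariance of the statement should be exploited to reduce to a bounded normalized configuration of roots, which then admits a compactness argument yielding $K_n$ as a function of $n$ alone.
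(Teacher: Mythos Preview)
The paper does not prove this lemma at all; it is quoted from Bennett \cite{BJ} (as the statement's label already indicates) and invoked as a black box, so there is no in-paper argument to compare your proposal against.

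Your overall reduction is correct and clean: factoring $P'$ over $\mathbb{R}$ and handling the real linear factors directly from the hypothesis leaves only the quadratic factors $(z-\alpha_j)^2+\beta_j^2$, and the clustering claim you isolate---that every complex root $\alpha+i\beta$ of $P'$ admits some $u\in U$ with $|u-\alpha|\le K_n\beta$---is both true and, via the dichotomy on the size of $\beta$ you wrote out, immediately finishes the lemma.

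The genuine gap is the justification of the clustering claim itself. The one-step Newton heuristic does not work as stated: from $P''(\alpha)=\beta^2 R'(\alpha)$ and $P'''(\alpha)=2R(\alpha)+\beta^2 R''(\alpha)$ you cannot conclude that $P''$ has an actual real zero at distance $O(\beta^2/|R(\alpha)|)$ from $\alpha$ without uniform control on the higher derivatives of $P''$ over an interval of that length, and $R(\alpha),R'(\alpha),R''(\alpha)$ carry no a priori bounds independent of the coefficients of $P$, so your ``$|R(\alpha)|$ bounded away from zero / small'' dichotomy has no fixed threshold. The compactness fallback you sketch is in principle viable---normalize so that $\alpha=0$, $\beta=1$, and argue that among all monic real polynomials $Q$ of degree $n-1$ having $\pm i$ as roots, some fixed interval $[-K_n,K_n]$ must meet the real zero set of $Q$ or $Q'$---but the relevant limit is when the remaining roots of $Q$ escape to infinity, which is exactly where naive compactness of the root configuration fails. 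One must show the clustering constant survives that degeneration (for instance via a Hurwitz-type limiting step together with induction on the degree of the limiting factor), and this is a genuine argument, not a consequence of compactness of a bounded set. As written, your proposal correctly locates the crux but does not close it.
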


\begin{lemma}\label{le2.4}Let $P$ be a real monic polynomial of degree $n\, (n>0)$ and of one real variable, and
$$E_k:=\left\{x\in \mathbb{R}:\ \left | \frac{\tilde{P}_k(x)}{\tilde{P}_k'(x)}\right |\leq \frac{4}{C_1} \textrm{and} \left(\frac{\tilde{P}_k}{\tilde{P}_k'}\right)'(x)\leq \frac{1}{8n} \right \},$$
where $\tilde{P}_k(x):=2^{-nk} P(2^kx)$, $k\geq 0$, $k\in \mathbb{Z}$. Then for any $\alpha\in (0,1)$, we have $ \Sigma_{k\geq 0}|E_k|^\alpha \leq C $, where $C$ only depends on $n$ and $C_1$.
\end{lemma}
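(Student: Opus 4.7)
The plan is to rescale via $y = 2^k x$ so that everything reduces to the fixed rational function $g := P/P'$. Since $\tilde{P}_k(x)/\tilde{P}_k'(x) = 2^{-k} g(2^k x)$ and $(\tilde{P}_k/\tilde{P}_k')'(x) = g'(2^k x)$, setting $M_k := 4\cdot 2^k/C_1$ gives $|E_k| = |\tilde{E}_k|/2^k$, where
\[ \tilde{E}_k := \{y \in \mathbb{R} : |g(y)| \leq M_k,\ g'(y) \leq 1/(8n)\} \]
is increasing in $k$ and contained in the $k$-independent set $F := \{y \in \mathbb{R} : g'(y) \leq 1/(8n)\}$.

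First I would analyze $F$. From $g' = 1 - PP''/(P')^2$ we see $F = \{Q \geq 0\}$, where $Q := 8nPP'' - (8n-1)(P')^2$ has degree $2n-2$ and leading coefficient $n^2[8(n-1) - (8n-1)] = -7n^2 < 0$. Thus $F$ is bounded and consists of at most $n-1$ connected components. Moreover, near each real zero $s$ of $P'$ with $P(s) \neq 0$, $g$ has a pole and $g'$ diverges to $-\infty$ from an appropriate side, so a suitable one-sided neighborhood of $s$ is included in $F$; the goal is to show that every component of $F$ is captured this way by some such pole.

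Next I would estimate the contribution from each component $I$ containing a pole $s$ of $g$. If $s$ has order $m$ (meaning $P^{(j)}(s)=0$ for $2 \leq j \leq m$ and $P^{(m+1)}(s) \neq 0$), set $A := m!\,P(s)/P^{(m+1)}(s)$; locally $g(y) \approx A(y-s)^{-m}$. Solving the defining inequalities near $s$ gives two complementary geometric facts, with constants depending only on $n$: the diameter of $F \cap I$ is at most $C_n |A|^{1/(m+1)}$, while $\{|g| > M\} \cap I$ is essentially a ball of radius $\sim (|A|/M)^{1/m}$ around $s$. Hence $|\tilde{E}_k \cap I| \leq C_n |A|^{1/(m+1)}$ uniformly in $k$, and $\tilde{E}_k \cap I = \emptyset$ unless $M_k \geq c_n |A|^{1/(m+1)}$. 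Defining $k_I^*$ by $2^{k_I^*} \asymp C_1 |A|^{1/(m+1)}$, this yields for $k \geq k_I^*$ the geometric estimate $|E_k \cap I| \leq (C_n/C_1) \cdot 2^{-(k-k_I^*)}$, whence $\sum_k |E_k \cap I|^\alpha \leq (C_n/C_1)^\alpha/(1-2^{-\alpha})$, independent of $P$. Summing over the at most $n-1$ components of $F$ then gives the claimed estimate with $C$ depending only on $n$, $C_1$, and $\alpha$.

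The most delicate step is the local-to-global control in the previous paragraph: each component of $F$ must be genuinely confined to a neighborhood of its pole of size $\sim |A|^{1/(m+1)}$, and one must rule out hypothetical components of $F$ containing no real zero of $P'$. Lemma~\ref{le2.3} is the natural tool here: the lower bound $|P'(z)| \geq C\delta^{n-1}$ away from real roots of $P'$ and $P''$ forces $(P')^2$ to dominate $PP''$ outside a bounded pole neighborhood, making $g' > 1/(8n)$ and so ejecting $z$ from $F$. The degenerate case $P(s)=0$ is simpler: then $g$ extends smoothly across $s$ with slope $1/\mu \geq 1/n > 1/(8n)$, where $\mu \leq n$ is the multiplicity of $s$ in $P$, so $s \notin F$ and contributes nothing to the analysis.
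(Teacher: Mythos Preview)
Your rescaling to $g = P/P'$ and the reduction to $|\tilde E_k|/2^k$ with $\tilde E_k \subset F = \{g' \le 1/(8n)\}$ are correct, but the step you flag as ``most delicate'' is a genuine gap, and the tool you propose does not close it. Lemma~\ref{le2.3} gives only a \emph{lower} bound on $|P'(z)|$ away from the real zeros of $P'$ and $P''$; it says nothing about $|P(z)P''(z)|$, which is what must be dominated to force $g'(z) > 1/(8n)$. For instance, take $P(x) = (x^2+L)^2$ with $L$ large: the set $U$ of real roots of $P'$ and $P''$ is $\{0\}$, so $\mathrm{dist}(1,U)=1$, yet $g'(1) = \tfrac14 - \tfrac{L}{4} \to -\infty$ and hence $1 \in F$. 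Being far from $U$ does not eject a point from $F$. Beyond this, your single--pole picture does not address components of $F$ containing several poles (say when two real zeros of $P'$ nearly coalesce, so that the Laurent expansion at one pole is valid only on an interval much shorter than $|A|^{1/(m+1)}$), nor poles at which $g'\to +\infty$ and which therefore sit outside $F$. You have not shown the diameter bound $\mathrm{diam}(I)\le C_n|A|^{1/(m+1)}$ uniformly over all monic $P$ of degree $n$, and that uniformity is the entire content of the lemma.

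The paper's argument takes a different route that sidesteps these issues: instead of real poles of $g$ it works with the (possibly complex) roots of $P$. Writing $\tilde P_k'/\tilde P_k$ and its derivative explicitly via the real roots $\nu_j$ and complex pairs $a_j\pm ib_j$, the two defining inequalities of $E_k$ combine (through Cauchy--Schwarz and pigeonholing) to force some term $(2^{-k}b_{j_0})/((x-2^{-k}a_{j_0})^2+(2^{-k}b_{j_0})^2)$ to be large, which confines $x$ to an interval around $2^{-k}a_{j_0}$ of length governed by $|2^{-k}b_{j_0}|^{1/2}$. The summability in $k$ then comes from the geometric decay of $2^{-k}b_{j_0}$. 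The control is provided by the \emph{imaginary parts of the roots of $P$}, a structure your pole-based analysis does not see.
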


\begin{proof}We denote the roots of $P$ as $\{\nu_j\}_{j=1}^{m}\subset \mathbb{R}$ and $\{\beta_j\}_{j=1}^{n'}\cup \{\bar{\beta_j}\}_{j=1}^{n'} \subset \mathbb{C}\backslash \mathbb{R}$, where $0\leq m\leq n, n'=\frac{n-m}{2}$ and $\beta_j=a_j+ib_j$. Then
$$\tilde{P}_k(x)=\Pi_{j=1}^m (x-2^{-k}\nu_j)  \Pi_{j=1}^{n'} (x-2^{-k}\beta_j) (x-2^{-k}\bar{\beta_j}).$$
Thus
$$\frac{\tilde{P}_k'(x)}{\tilde{P}_k(x)}=\sum_{j=1}^m \frac{1}{x-2^{-k}\nu_j}+2  \sum_{j=1}^{n'}  \frac{x-2^{-k}a_j}{(x-2^{-k}a_j)^2+(2^{-k}b_j)^2}$$
and
$$\left(\frac{\tilde{P}_k'}{\tilde{P}_k}\right)'(x)=-\sum_{j=1}^m \frac{1}{(x-2^{-k}\nu_j)^2}-2  \sum_{j=1}^{n'}  \frac{(x-2^{-k}a_j)^2-(2^{-k}b_j)^2}{((x-2^{-k}a_j)^2+(2^{-k}b_j)^2)^2}.$$
For any $l\in \mathbb Z$, we set
$$E_{k,l}:=\left\{x\in \mathbb{R}:\ 2^{l-1}\frac{4}{C_1}\leq\left | \frac{\tilde{P}_k(x)}{\tilde{P}_k'(x)}\right |\leq 2^l\frac{4}{C_1} \textrm{and} \left(\frac{\tilde{P}_k}{\tilde{P}_k'}\right)'(x)\leq \frac{1}{8n} \right \}.$$
Hence,
\begin{align}\label{eq:2.0}
|E_k| = \sum_{l=-\infty}^{0}|E_{k,l}|.
\end{align}
Note that $\left(\frac{\tilde{P}_k}{\tilde{P}_k'}\right)'(x)=-\left(\frac{\tilde{P}_k}{\tilde{P}_k'}\right)^2(x) \left(\frac{\tilde{P}_k'}{\tilde{P}_k}\right)'(x) $, for any $x\in E_{k,l}$, by definition
\begin{align}\label{eq:2.1}
\left(\frac{\tilde{P}_k}{\tilde{P}_k'}\right)^2(x)\left(\sum_{j=1}^m \frac{1}{(x-2^{-k}\nu_j)^2}+2  \sum_{j=1}^{n'}  \frac{(x-2^{-k}a_j)^2-(2^{-k}b_j)^2}{((x-2^{-k}a_j)^2+(2^{-k}b_j)^2)^2}\right)\leq \frac{1}{8n}
\end{align}
and
\begin{align}\label{eq:2.2}
\left | \sum_{j=1}^m \frac{1}{x-2^{-k}\nu_j}+2  \sum_{j=1}^{n'}  \frac{x-2^{-k}a_j}{(x-2^{-k}a_j)^2+(2^{-k}b_j)^2}\right |\geq \frac{C_1}{4\cdot2^l}. \end{align}
From which we obtain
\begin{align}\label{eq:2.3}
\sum_{j=1}^m \frac{1}{|x-2^{-k}\nu_j|}+2  \sum_{j=1}^{n'}  \frac{|x-2^{-k}a_j|}{(x-2^{-k}a_j)^2+(2^{-k}b_j)^2}\geq \frac{C_1}{4\cdot2^l}.
\end{align}
By $\textrm{H}\ddot{\textrm{o}}\textrm{lder's}$ inquality, \eqref{eq:2.3} implies
\begin{align}\label{eq:2.4}
\sum_{j=1}^m \frac{1}{(x-2^{-k}\nu_j)^2}+2  \sum_{j=1}^{n'}  \frac{(x-2^{-k}a_j)^2}{((x-2^{-k}a_j)^2+(2^{-k}b_j)^2)^2}\geq \frac{1}{n} \left(\frac{C_1}{4\cdot2^l}\right)^2 .
\end{align}
From \eqref{eq:2.1}, we have
\begin{align}\label{eq:2.5}
\sum_{j=1}^m \frac{1}{(x-2^{-k}\nu_j)^2}+2  \sum_{j=1}^{n'}  \frac{(x-2^{-k}a_j)^2-(2^{-k}b_j)^2}{((x-2^{-k}a_j)^2+(2^{-k}b_j)^2)^2}\leq \frac{1}{8n} \left(\frac{C_1}{2\cdot2^l}\right)^2.
\end{align}
Combining \eqref{eq:2.4} and \eqref{eq:2.5}, we have
\begin{align}\label{eq:2.6}
\sum_{j=1}^{n'}  \frac{(2^{-k}b_j)^2}{((x-2^{-k}a_j)^2+(2^{-k}b_j)^2)^2}
 \geq\frac{1}{2} \left(\frac{1}{n} \left(\frac{C_1}{4\cdot2^l}\right)^2-\frac{1}{8n} \left(\frac{C_1}{2\cdot2^l}\right)^2\right)=\frac{1}{64n} \left(\frac{C_1}{2^l}\right)^2.
\end{align}
By Minkowski inequality, it gives
\begin{align}\label{eq:2.7}
 \sum_{j=1}^{n'}    \frac{|2^{-k}b_j|}{(x-2^{-k}a_j)^2+(2^{-k}b_j)^2}\geq \frac{1}{8 n^{\frac{1}{2}}}\frac{C_1}{2^l}.
 \end{align}
By pigeonholing, there exists $1\leq j_0 \leq n'$ such that
\begin{align}\label{eq:2.8}
\frac{|2^{-k}b_{j_0}|}{(x-2^{-k}a_{j_0})^2+(2^{-k}b_{j_0})^2}\geq \frac{1}{8n' n^{\frac{1}{2}}}\frac{C_1}{2^l} .
\end{align}
It is then not hard to check that
$$E_{k,l} \subset \bigcup_{j=1}^{n'}E_{k,l,j},$$
where
\begin{align}
 E_{k,l,j}\nonumber
 =\left\{x\in \mathbb{R}:\  |2^{-k}b_{j}|\leq  \frac{8n' n^{\frac{1}{2}}2^l }{C_1} ~\textrm{and}~ |x-2^{-k}a_{j}| \leq  \left(\frac{8n' n^{\frac{1}{2}}2^l }{C_1}\right)^{\frac{1}{2}}  |2^{-k}b_{j}|^{\frac{1}{2}} \right \}.\
 \end{align}
Since $E_{k,l,j}=\emptyset$ if $|2^{-k}b_{j}|\geq  \frac{8n' n^{\frac{1}{2}}2^l }{C_1} $, and $|E_{k,l,j}| \leq \left(\frac{8n' n^{\frac{1}{2}}2^l }{C_1}\right)^{\frac{1}{2}}  |2^{-k}b_{j}|^{\frac{1}{2}} $ if $|2^{-k}b_{j}|\leq  \frac{8n' n^{\frac{1}{2}}2^l }{C_1} $, thus
$$ \sum_{k\geq 0} |E_{k,l,j}|^\alpha\lesssim 2^{\alpha l}$$
uniformly in $j$ and $l$. Hence, for any $\alpha\in (0,1)$,
\begin{eqnarray*}
% \nonumber to remove numbering (before each equation)
 \sum_{k\geq 0}|E_k|^\alpha
    \leq   \sum_{k\geq 0}\left(\sum_{l=-\infty}^{0}\sum_{j=1}^{n'}|E_{k,l,j}|\right)^\alpha
      \leq   \sum_{l=-\infty}^{0}\sum_{j=1}^{n}\sum_{k\geq 0}|E_{k,l,j}|^\alpha
     \lesssim   \sum_{l=-\infty}^{0}2^{\alpha l}
    \lesssim 1.
\end{eqnarray*}

\end{proof}
It is easy to see that
\begin{lemma}\label{le2.5} Let $f\in C(\mathbb{R})$, suppose that the sign of $f'$ changes $m$ times on $\mathbb{R}$ and there exists a positive constant $C$ such that $|f(z)|\leq C$ for any $z\in \mathbb{R}$. Then $\int_{-\infty}^{\infty} |f'(z)| \,\textrm{d}z\leq 2(m+1)C$.
\end{lemma}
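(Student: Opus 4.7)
The plan is to partition $\mathbb{R}$ into the maximal intervals on which $f'$ keeps a constant sign and to use the fundamental theorem of calculus on each of them separately. More precisely, since the sign of $f'$ changes exactly $m$ times, I would pick points $-\infty = a_0 < a_1 < \cdots < a_m < a_{m+1} = +\infty$ so that on each open subinterval $I_i := (a_i,a_{i+1})$, $i=0,1,\ldots,m$, the derivative $f'$ is either everywhere non-negative or everywhere non-positive. There are exactly $m+1$ such subintervals.

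On each such $I_i$, the function $f$ is monotone, and $|f'(z)| = \varepsilon_i f'(z)$ for a fixed $\varepsilon_i \in \{+1,-1\}$. By the fundamental theorem of calculus,
$$\int_{I_i} |f'(z)|\,\textrm{d}z = \varepsilon_i \bigl(f(a_{i+1}^-) - f(a_i^+)\bigr),$$
where the one-sided limits at $a_i$ and $a_{i+1}$ exist because $f$ is monotone on $I_i$ and bounded by $C$ (this also handles the case where $a_0 = -\infty$ or $a_{m+1} = +\infty$, via the monotone bounded convergence argument). Since both limits lie in $[-C,C]$, we obtain $\int_{I_i}|f'(z)|\,\textrm{d}z \leq 2C$. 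Summing over the $m+1$ intervals gives
$$\int_{-\infty}^{\infty} |f'(z)|\,\textrm{d}z = \sum_{i=0}^{m} \int_{I_i} |f'(z)|\,\textrm{d}z \leq 2(m+1)C,$$
which is the desired bound.

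The only mild subtlety is the justification at the possibly infinite endpoints and at the isolated sign-change points $a_i$ where $f'$ may vanish or be ambiguous; these form a finite set of measure zero and, by the monotone-plus-bounded argument, do not disturb the estimate. Given the paper's remark that the statement is elementary, I expect no further obstacle.
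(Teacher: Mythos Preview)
Your argument is correct and is exactly the natural proof the authors have in mind: the paper does not give any detailed argument for this lemma, merely prefacing it with ``It is easy to see that''. Your decomposition of $\mathbb{R}$ into the $m+1$ maximal intervals of constant sign for $f'$, followed by the fundamental theorem of calculus and the bound $|f|\le C$ on each piece, is the standard justification and requires nothing further.
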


\section{Proof of the main result}

We devote this section to prove Theorem \ref{main result}. By Fourier transform and Plancherel's formula, see \cite{PS}, we have
$$\|H_{P,\gamma}\|_{L^2(\mathbb{R}^{2})\rightarrow L^2(\mathbb{R}^{2})} \leq \sup_{u\in \mathbb{R}} \|S_{u}\|_{L^2(\mathbb{R})\rightarrow L^2(\mathbb{R})} ,$$
where
\begin{equation}\label{4.1}S_uf(x):=\mathrm{p.\,v.}\int_{-\infty}^{\infty}e^{-iuP(x)\gamma(y)}  f(x-y)\,\frac{\textrm{d}y}{y},\quad \forall\, x\in\mathbb{R}^2.\end{equation}
Here $P$ is a polynomial of degree $n$. Thus, the remainder of the proof is devoted to the proof of the following Proposition:
\begin{proposition}\label{pro:4.2} Let $S_u$ be defined as in \eqref{4.1}, we have
\begin{equation}
 \|S_{u}\|_{L^2(\mathbb{R})\rightarrow L^2(\mathbb{R})}\leq C.
\end{equation}
Here $C$ is independent of the coefficients of polynomial $P$ and $u$, it is dependent only on $n$ and $C_1$.
\end{proposition}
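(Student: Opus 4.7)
My approach is a dyadic decomposition of $S_u$ in $y$, followed by a $TT^\ast$/van der Corput analysis of each dyadic piece, with the summation in $k$ organized according to whether $x$ falls into the set $E_k$ of Lemma \ref{le2.4}. Induction on $\deg P$ may be used to reduce to the case where $P$ is monic; the base case $n=0$ (constant $P$) reduces to a convolution whose $L^2$-multiplier is, by Plancherel, the symbol of the Hilbert transform along the curve $(t,\gamma(t))$, which is $L^2$-bounded by \cite{CCCD,CCVWW} in view of Lemma \ref{le2.1} (which supplies conditions (D) and (ID)).

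By the symmetry of $\gamma$ it suffices to treat $y>0$, and after absorbing the leading coefficient of $P$ into $u$ I may assume $P$ is monic of degree $n$. Fix a smooth bump $\psi$ supported in $[1/2,2]$ with $\sum_{k\in\mathbb Z}\psi(y/2^k)=1$ on $(0,\infty)$, and decompose $S_u=\sum_{k\in\mathbb Z} T_k^u$ where
\begin{equation*}
T_k^u f(x) = \int_0^{\infty} e^{-iu P(x)\gamma(y)}\, f(x-y)\,\psi(y/2^k)\,\frac{dy}{y}.
\end{equation*}
Each $T_k^u$ is trivially bounded on $L^2$; the core task is to sum in $k$. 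Applying $TT^\ast$, the kernel of $T_k^u (T_k^u)^\ast$ is
\begin{equation*}
L_k(x,w) = \int e^{iu\Phi_k(y;x,w)}\,\frac{\psi(y/2^k)\,\psi\bigl((y-(x-w))/2^k\bigr)}{y\,(y-(x-w))}\,dy,
\end{equation*}
with $\Phi_k(y;x,w) = P(w)\gamma(y-(x-w)) - P(x)\gamma(y)$, supported where $|x-w|\ls 2^k$. Decomposing $\Phi_k'(y) = (P(w)-P(x))\gamma'(y-(x-w)) + P(x)\bigl(\gamma'(y-(x-w)) - \gamma'(y)\bigr)$ and applying the mean value theorem, the oscillation is driven, to leading order, by $u(w-x)\bigl[P'(\eta)\gamma'(2^k) + P(x)\gamma''(2^k)\bigr]$ for some $\eta$ between $x$ and $w$. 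Van der Corput together with Lemma \ref{le2.5} (applied to the monotone pieces of $\gamma''$ provided by condition (iv)) then controls $|L_k(x,w)|$ by a negative power of $|u|\,|w-x|\,\bigl[|P'(x)|\gamma'(2^k)+|P(x)|\gamma''(2^k)\bigr]$, while Lemma \ref{le2.2} locates the unique critical point of $\Phi_k(\,\cdot\,;x,w)$ and allows integration by parts away from it through the uniform bound $\Upsilon\le 2/C_1$.

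The resulting decay is summable in $k$ as long as $x\in E_k^c$: there either $|\widetilde P_k(x)/\widetilde P_k'(x)|\gs 1$ or $(\widetilde P_k/\widetilde P_k')'(x)\gs 1$, and either inequality forces the rescaled bracket $|P'(x)|\gamma'(2^k)+|P(x)|\gamma''(2^k)$ to be $\gs 2^{-k}$ after the natural normalization, producing geometric $k$-decay. On $E_k$ only the trivial bound is available, and there one argues by combining $\sum_k|E_k|^{\alpha}<\infty$ (Lemma \ref{le2.4}, for any $\alpha\in(0,1)$) with a H\"older/almost-orthogonality estimate, or alternatively by using Lemma \ref{le2.3} to reduce to a polynomial of strictly smaller degree and invoking the induction hypothesis. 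The main obstacle is precisely the uniform-in-$k$ $TT^\ast$ estimate on $E_k^c$: one must combine the flatness of $\gamma$ near $0$ (conditions (i),(iii)) with the curvature information in (ii),(iv) and Lemma \ref{le2.2}, while simultaneously tracking $P$ and $P'$ across dyadic scales in a way compatible with the set $E_k$ of Lemma \ref{le2.4}. Once that uniform estimate is secured, the summation in $k$ is routine.
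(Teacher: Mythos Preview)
Your architecture (induction on $\deg P$, dyadic decomposition, split according to $E_k$, $TT^\ast$ on $E_k^c$) matches the paper's, but two structural choices you make create genuine gaps.

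\textbf{Missing normalization.} The paper does not decompose over all $k\in\mathbb Z$. Before decomposing, it chooses $\omega>0$ with $|us|\,\omega^n\gamma(\omega)=1$ ($s$ the leading coefficient) and rescales so that the operator becomes $Sf(x)=\mathrm{p.v.}\int e^{iP(x)\tilde\gamma(x-y)}f(y)\,\frac{dy}{x-y}$ with $P$ monic and $\tilde\gamma(1)=1$. Only then does one split into the local piece $|x-y|\le1$ (handled by the induction hypothesis, via Bennett's comparison of $P(x)\tilde\gamma(x-y)$ with a lower-degree phase) and the dyadic pieces $k\ge0$. You never introduce $\omega$, so (i) your ``induction may be used to reduce to $P$ monic'' sentence is not the actual inductive step---monicity is a triviality, the induction is what disposes of the local part; and (ii) Lemma~\ref{le2.4} only gives $\sum_{k\ge0}|E_k|^\alpha<\infty$, so your sum over $k\in\mathbb Z$ on the $E_k$ side is not covered. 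More importantly, without the normalization there is no reason for the phase at scale $2^k$ to carry the large factor $2^{nk}\gamma(\omega2^k)/\gamma(\omega)$ that produces the geometric decay you assert on $E_k^c$.

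\textbf{$TT^\ast$ versus $T^\ast T$.} You form $T_kT_k^\ast$, whose phase is $P(w)\gamma(y-(x-w))-P(x)\gamma(y)$ with $P$ evaluated at the \emph{external} points $x,w$ and the integration in $y$. The paper instead forms $T_k^\ast T_k$ (after the $\chi_{E_k^c}$ localization), so the kernel is $\int_{z\notin E_k} e^{i\lambda\,\tilde P_k(z)\,[G(z-x)-G(z-y)]}\frac{dz}{(z-x)(z-y)}$: the polynomial sits at the \emph{integration} variable $z$, precisely where the $E_k^c$ restriction lives. This is what makes Lemma~\ref{le2.2} (the monotonicity and bound for $\Upsilon(z)$) and Lemma~\ref{le2.3} bite directly on the phase derivative $\psi'(z)=\phi'(z)\tilde P_k(z)+\phi(z)\tilde P_k'(z)$: on $E_k^c$ one has either $|\tilde P_k(z)/\tilde P_k'(z)|>4/C_1$, which combined with $\Upsilon(z)\le 2/C_1$ gives $|\psi'(z)/(\tilde P_k'(z)\phi'(z))|\ge 2/C_1$, or $(\tilde P_k/\tilde P_k')'(z)>1/(8n)$, which forces $\psi'/(\tilde P_k'\phi')$ to be strictly increasing with at most one zero. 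Your heuristic ``the oscillation is driven by $u(w-x)[P'(\eta)\gamma'(2^k)+P(x)\gamma''(2^k)]$'' does not survive the possibility that the two bracketed terms cancel; the $E_k^c$ condition on an external variable $x$ does not prevent this for the integral in $y$. The paper's Proposition~\ref{pro:4.1} is exactly the careful replacement for your heuristic, and it needs the $T^\ast T$ orientation.
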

\begin{proof}
 As in \cite{BJ,BJM,CWW,CZx}, we will induct on the degree of the polynomial. The start point is the case $n=0$.  The $L^2$ boundedness of $S_u$  then can be converted to the $L^2$ boundedness of the following directional Hilbert transform $H_{\lambda,\gamma}$ along a general curve $\gamma$ defined for a fixed direction $(1,\lambda)$ for any $\lambda\in \mathbb{R}$ as
\begin{equation}
H_{\lambda,\gamma} f(x_1,x_2):=\mathrm{p.\,v.}\int_{-\infty}^{\infty} f(x_1-t,x_2-\lambda\gamma(t))\,\frac{\textrm{d}t}{t},\quad \forall\, (x_1,x_2)\in\mathbb{R}^2.\nonumber
\end{equation}
By scaling on the second variation we know that the $L^2$ boundedness of $H_{\lambda,\gamma}$, with $\lambda\neq0$, is the same as
\begin{equation}
H_\gamma f(x_1,x_2)=\mathrm{p.\,v.}\int_{-\infty}^{\infty} f(x_1-t,x_2-\gamma(t))\,\frac{\textrm{d}t}{t},\quad \forall\, (x_1,x_2)\in\mathbb{R}^2,\nonumber
\end{equation}
which has been defined in section 1. Since the $L^2$ boundedness of $H_{0,\gamma}$ is trivial, thus we could obtain an uniform $L^2$ estimate for $S_u$ if we obtain the $L^2$ boundedness of $H_\gamma$.
As we pointed out in Lemma \ref{le2.1}, the conditions of Theorem \ref{main result} imply the double condition $(\textrm{D})$ and infinitesimally doubling condition $(\textrm{ID})$, by apply the result in \cite{CCCD} or in \cite{CCVWW}, we obtain the result for the case $n=0$.

We now consider the case $n>0$. Suppose $s$ is the coefficient of the term of the highest order in $P$, since $\gamma(0)=0$ and $\gamma$ is increasing on $(0,\infty)$, we can take a positive constant $\omega$, unless $\gamma\equiv 0$ which is trivial, such that
\begin{equation}\label{4.5}|-us|\omega^n\gamma(\omega)=1.\end{equation}
Thus
\begin{eqnarray*}
% \nonumber to remove numbering (before each equation)
 S_uf(\omega x)
   &=&\mathrm{p.\,v.}\int_{-\infty}^{\infty} e^{-iuP(\omega x)\gamma(\omega x-y)}  f(y)\,\frac{\textrm{d}y}{\omega x-y}\\
    &=&\mathrm{p.\,v.}\int_{-\infty}^{\infty} e^{-iuP(\omega x)\gamma(\omega (x-y))}  f(\omega y)\,\frac{\textrm{d}y}{ x-y}\\
   &=&\mathrm{p.\,v.}\int_{-\infty}^{\infty} e^{-iu\gamma (\omega)P(\omega x)\frac{\gamma(\omega (x-y))}{\gamma (\omega)}}  f(\omega y)\,\frac{\textrm{d}y}{ x-y}.
\end{eqnarray*}
We define
\begin{equation}\label{eq:4.3}Sf(x):=\mathrm{p.\,v.}\int_{-\infty}^{\infty} e^{iP(x)\tilde{\gamma}(x-y)}  f(y)\,\frac{\textrm{d}y}{x-y},
\end{equation}
where $P$ is a monic polynomial, $\tilde{\gamma}(t):=\frac{\gamma(\omega t)}{\gamma (\omega)}$. By scaling, we need then to set up
\begin{equation}
\|S\|_{L^2(\mathbb{R})\rightarrow L^2(\mathbb{R})}\lesssim 1.
\end{equation}

Suppose that $S$ is bounded on $L^2(\mathbb{R})$ for all polynomials $P$ of degree less than $n$, with a bound independent of the coefficients of $P$.  We decompose
\begin{align}\label{eq:S}
Sf(x)
   =& \int_{|x-y|\leq 1}  e^{iP(x)\tilde{\gamma}(x-y)} f(y)\,\frac{\textrm{d}y}{x-y}+  \sum_{k\geq 0} \int_{2^k\leq|x-y|\leq 2^{k+1}}  e^{iP(x)\tilde{\gamma}(x-y)}  f(y)\,\frac{\textrm{d}y}{x-y}\\
  =:& S^1f(x)+ \sum_{k\geq 0}S_kf(x).\nonumber
\end{align}

The first part is easy. It is exactly as the local part in \cite{BJ}, where the author asked for $\tilde{\gamma}$ is convex on $(0,\infty)$, $\tilde{\gamma} (0)=0$, $\tilde{\gamma}(1)=1$, and the inductive hypothesis. Thus
$$\|S^1\|_{L^2(\mathbb{R})\rightarrow L^2(\mathbb{R})} \lesssim 1$$
with the bound independent of the coefficients of $P$.

For $k\geq 0$, we set
$$ \tilde{S}_kf(x):=\int_{1\leq|x-y|\leq 2} e^{i2^{nk} \frac{\gamma (\omega 2^k)}{\gamma (\omega)} \tilde{P}_k(x) \frac{\gamma (\omega 2^k(x-y))}{\gamma (\omega 2^k)}  }  f(y)\,\frac{\textrm{d}y}{x-y}, $$
where $\tilde{P}_k(x):=2^{-nk} P(2^kx)$ is a real monic polynomial. $\tilde{S}_k$ is a scaling of $S_k$ and thus share the same $L^p$ norm for any given $p\in (1,\infty)$. Since $\gamma$ is either even or odd, we here only consider half operator
$$ \tilde{\mathbb{S}}_k f(x):=\int_{1\leq x-y\leq 2} e^{i2^{nk} \frac{\gamma (\omega 2^k)}{\gamma (\omega)} \tilde{P}_k(x) \frac{\gamma (\omega 2^k(x-y))}{\gamma (\omega 2^k)}  }  f(y)\,\frac{\textrm{d}y}{x-y}. $$

To distinct the critical points of the phase function, let
$$E_k:=\left\{x\in \mathbb{R}:\ \left | \frac{\tilde{P}_k(x)}{\tilde{P}_k'(x)}\right |\leq \frac{4}{C_1} \textrm{ and } \left(\frac{\tilde{P}_k}{\tilde{P}_k'}\right)'(x)\leq \frac{1}{8n} \right \}.$$
By this set, we decompose $\tilde{\mathbb{S}}_k$ further as $\tilde{\mathbb{S}}_{ka}$ where the phase function has critical points and $\tilde{\mathbb{S}}_{kb}$ where the phase function has not critical points.
$$\tilde{\mathbb{S}}_{ka}f(x):= \chi_{E_k}(x)\int_{1\leq x-y\leq 2} e^{i2^{nk} \frac{\gamma (\omega 2^k)}{\gamma (\omega)} \tilde{P}_k(x) \frac{\gamma (\omega 2^k(x-y))}{\gamma (\omega 2^k)}  }  f(y)\,\frac{\textrm{d}y}{x-y}$$
and
$$\tilde{\mathbb{S}}_{kb}f(x):= \chi_{E_k^{\complement}}(x)\int_{1\leq x-y\leq 2} e^{i2^{nk} \frac{\gamma (\omega 2^k)}{\gamma (\omega)} \tilde{P}_k(x) \frac{\gamma (\omega 2^k(x-y))}{\gamma (\omega 2^k)}  }  f(y)\,\frac{\textrm{d}y}{x-y}.$$

To estimation for $\tilde{\mathbb{S}}_{ka}$, we use the character that the set $E_k$ is not very large. It is easy to see that
$$ \|\tilde{\mathbb{S}}_{ka}f\|_{L^1(\mathbb{R})} \leq |E_k| \|f\|_{L^1(\mathbb{R})}$$
and
$$ \|\tilde{\mathbb{S}}_{ka}f\|_{L^\infty(\mathbb{R})} \lesssim \|f\|_{L^\infty(\mathbb{R})} .$$
By interpolation,
$$ \|\tilde{\mathbb{S}}_{ka}f\|_{L^p(\mathbb{R})} \leq |E_k|^{\frac{1}{p}} \|f\|_{L^p(\mathbb{R})}.$$
Thus by Lemma \ref{le2.4},
 \begin{equation}\label{4.7}\sum_{k\geq 0}\|\tilde{\mathbb{S}}_{ka}\|_{L^p(\mathbb{R})\rightarrow L^p(\mathbb{R})} \lesssim\sum_{k\geq 0} |E_k|^{\frac{1}{p}} \lesssim 1. \end{equation}

For $\tilde{\mathbb{S}}_{kb}$, the bad part is that the size of $E_k^{\complement}$ is large but the good news is that there is no critical point of phase function. Thus we run a $TT^*$ argument. It is easy to see that \begin{equation}\label{4.8}\|\tilde{\mathbb{S}}_{k b}\|_{L^2(\mathbb{R})\rightarrow L^2(\mathbb{R})}=\|\tilde{\mathbb{S}}_{k b}^*\tilde{\mathbb{S}}_{k b}\|^{\frac{1}{2}}_{L^2(\mathbb{R})\rightarrow L^2(\mathbb{R})}.\end{equation} The kernel of $\tilde{\mathbb{S}}_{k b}^*\tilde{\mathbb{S}}_{k b}$ can be written as
$$L_k(x,y):=\int_{1 \leq z-x, z-y \leq 2,z \notin  E_k} \frac{e^{i2^{nk} \frac{\gamma (\omega 2^k)}{\gamma (\omega)} \tilde{P}_k(z) \left(\frac{\gamma (\omega 2^k(z-x))}{\gamma (\omega 2^k)} -\frac{\gamma (\omega 2^k(z-y))}{\gamma (\omega 2^k)} \right)}}{(z-x)(z-y)}\,\textrm{d}z. $$
By symmetry, it suffices to consider the kernel
$$\mathbb{L}_k(x,y):=\int_{1 \leq z-y < z-x \leq 2,z \notin  E_k}  \frac{e^{i2^{nk} \frac{\gamma (\omega 2^k)}{\gamma (\omega)} \tilde{P}_k(z) \left(\frac{\gamma (\omega 2^k(z-x))}{\gamma (\omega 2^k)} -\frac{\gamma (\omega 2^k(z-y))}{\gamma (\omega 2^k)} \right)}}{(z-x)(z-y)}\,\textrm{d}z. $$
For fixed $x,y\in \mathbb{R}$, and for any $z\in \mathbb{R}$ satisfies $1\leq z-y < z-x \leq 2$, let
$$\phi(z):=2^{nk} \frac{\gamma (\omega 2^k)}{\gamma (\omega)}  \left(\frac{\gamma (\omega 2^k(z-x))}{\gamma (\omega 2^k)} -\frac{\gamma (\omega 2^k(z-y))}{\gamma (\omega 2^k)} \right),$$
$$\psi(z):=\phi(z)\tilde{P}_k(z),$$
and for any $r\in \mathbb{R}$ satisfies $1\leq r-y < r-x \leq 2$, let
\begin{equation}\label{4.9}J^r:=\int_{(y+1,r)\backslash E_k} e^{i\psi(z)}\,\textrm{d}z. \end{equation}
For \eqref{4.9}, we collect two estimates for it in Proposition \ref{pro:4.1} which we will give the proof later. We note that $\frac{\textrm{d}}{\textrm{d}z} \frac{1}{(z-x)(z-y)} <0 $ for any $z$ satisfies $1 \leq z-y < z-x \leq 2$ and $E_k$ is made up by $C_{(n)}$ intervals. If we have the estimate of \eqref{eq:4.2}, then
\begin{align}\label{eq:4.4}
&\mathbb{L}_k(x,y)\\
 =&\int_{1 \leq z-y < z-x \leq 2, z \notin  E_k}   \frac{\textrm{d}}{\textrm{d}z} \left(J^z \right)    \frac{1}{(z-x)(z-y)}\,\textrm{d}z \nonumber\\
=&\left[  \frac{J^z}{(z-x)(z-y)} \right]_{1 \leq z-y < z-x \leq 2, z \notin  E_k}-\int_{1 \leq z-y < z-x \leq 2, z \notin  E_k}J^z \frac{\textrm{d}}{\textrm{d}z}\left ( \frac{1}{(z-x)(z-y)} \right) \,\textrm{d}z   \nonumber\\
\lesssim& \left( \sup_{z:\ 1 \leq z-y < z-x \leq 2}|J^z| \right) \frac{1}{|x-y|} + \left( \sup_{z:\ 1 \leq z-y < z-x \leq 2}|J^z| \right)     \int_{1 \leq z-y < z-x \leq 2, z \notin  E_k} \left| \frac{\textrm{d}}{\textrm{d}z}\left ( \frac{1}{(z-x)(z-y)} \right)\right|\, \textrm{d}z \nonumber\\
\approx& \left( \sup_{z:\ 1 \leq z-y < z-x \leq 2}|J^z| \right) \frac{1}{|x-y|} + \left( \sup_{z:\ 1 \leq z-y < z-x \leq 2}|J^z| \right)   \left|  \int_{1 \leq z-y < z-x \leq 2, z \notin  E_k}  \frac{\textrm{d}}{\textrm{d}z}\left ( \frac{1}{(z-x)(z-y)} \right)\, \textrm{d}z \right|\nonumber\\
\lesssim&  \left( \sup_{z:\ 1 \leq z-y < z-x \leq 2}|J^z| \right) \frac{1}{|x-y|}\nonumber\\
\lesssim&  \left(\frac{1}{|x-y|}\right)^{\frac{n+1}{n}} 2^{-k}.\nonumber
\end{align}
Meanwhile, we have the following trivial estimate,
\begin{align}\label{eq:4.5}
|\mathbb{L}_k(x,y)|\leq \int_{1 \leq z-y < z-x \leq 2,z \notin  E_k} \frac{1}{|(z-x)(z-y)|}\,\textrm{d}z\lesssim 1.
\end{align}
Therefore,
\begin{align}\label{eq:4.6}
|\mathbb{L}_k(x,y)|\lesssim \min\left\{ \left(\frac{1}{|x-y|}\right)^{\frac{n+1}{n}}2^{-k}, 1 \right\}\lesssim \left(\frac{1}{|x-y|}\right)^{\frac{1}{2}}2^{-\frac{n}{2(n+1)}k}.
\end{align}
From \eqref{eq:4.5} and \eqref{eq:4.6},
\begin{align}\label{eq:4.7}
\int_{-\infty}^{\infty}\left|\mathbb{L}_k(x,y) \right|\,\textrm{d}x
\lesssim& \int_{|x-y|\leq1} \left(\frac{1}{|x-y|}\right)^{\frac{1}{2}}2^{-\frac{n}{2(n+1)}k} \,\textrm{d}x+ \int_{|x-y|\geq1} \left(\frac{1}{|x-y|}\right)^{\frac{n+1}{n}} 2^{-k}\,\textrm{d}x\\
\lesssim&2^{-\frac{n}{2(n+1)}k}.\nonumber
\end{align}

If we have the estimate \eqref{eq:4.3}, again by the same argument as \eqref{eq:4.4} we obtain
\begin{align}\label{eq:4.8}
\mathbb{L}_k(x,y)\lesssim \left(\frac{1}{|x-y|}\right)^{\frac{n+2}{n+1}} 2^{-\frac{n}{n+1}k}.
\end{align}
Combine \eqref{eq:4.5}, which leads to
\begin{align}\label{eq:4.9}
|\mathbb{L}_k(x,y)|\lesssim \min\left\{ \left(\frac{1}{|x-y|}\right)^{\frac{n+2}{n+1}} 2^{-\frac{n}{n+1}k}, 1  \right\}\lesssim \left(\frac{1}{|x-y|}\right)^{\frac{1}{2}}2^{-\frac{n}{2(n+2)}k} ,
\end{align}
From \eqref{eq:4.8} and \eqref{eq:4.9},
\begin{align}\label{eq:4.10}
\int_{-\infty}^{\infty}\left|\mathbb{L}_k(x,y) \right|\,\textrm{d}x
\lesssim&\int_{|x-y|\leq1} \left(\frac{1}{|x-y|}\right)^{\frac{1}{2}}2^{-\frac{n}{2(n+2)}k}\, \textrm{d}x+ \int_{|x-y|\geq1} \left(\frac{1}{|x-y|}\right)^{\frac{n+2}{n+1}} 2^{-\frac{n}{n+1}k}\,\textrm{d}x\\
\lesssim&2^{-\frac{n}{2(n+2)}k}.\nonumber
\end{align}

Both \eqref{eq:4.7} and \eqref{eq:4.10} lead us to
$$\int_{-\infty}^{\infty}\left|\mathbb{L}_k(x,y) \right|\,\textrm{d}x\lesssim2^{-\frac{n}{2(n+2)}k}.$$
And it is easy to check that
$$\int_{-\infty}^{\infty}\left|\mathbb{L}_k(x,y) \right|\,\textrm{d}y\lesssim2^{-\frac{n}{2(n+2)}k}.$$
By interpolation and \eqref{4.8}, we have
$$\sum_{k\geq 0}\|\tilde{\mathbb{S}}_{k b}\|_{L^2(\mathbb{R})\rightarrow L^2(\mathbb{R})}\lesssim \sum_{k\geq 0} 2^{-\frac{n}{4(n+2)}k}\lesssim 1.$$
This finishes the proof of Proposition \ref{pro:4.2}.
\end{proof}

\begin{proposition}\label{pro:4.1} For fixed $x<y$ and for any $r\in \mathbb{R}$ satisfies $1\leq r-y < r-x \leq 2$, and $z \notin  E_k$, $J^r$ is defined in \eqref{eq:4.4}. We have the following estimates.
\begin{enumerate}
\item If $\left | \frac{\tilde{P}_k(z)}{\tilde{P}_k'(z)}\right |> \frac{4}{C_1}$, then
\begin{align}\label{eq:4.2}
|J^r|\leq C \left(\frac{1}{2^{nk}|x-y|}\right)^{\frac{1}{n}}.
\end{align}

\item If $\left(\frac{\tilde{P}_k}{\tilde{P}_k'}\right)'(z)> \frac{1}{8n}$, then
\begin{align}\label{eq:4.3}
|J^r|\leq C \left(\frac{1}{2^{nk}|x-y|}\right)^{\frac{1}{n+1}}.
\end{align}
\end{enumerate}
Here the bound $C$ are independent of the coefficients of polynomial $P$ and dependent only on $n$ and $C_1$.
\end{proposition}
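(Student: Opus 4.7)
The plan is to differentiate the phase $\psi(z)=\phi(z)\tilde{P}_k(z)$, derive pointwise lower bounds for $|\psi'(z)|$ on a ``good'' subset of the integration region, apply integration by parts there, and control the ``bad'' complement by its Lebesgue measure; optimizing a threshold $\delta$ then produces the claimed powers $-1/n$ and $-1/(n+1)$ of $\Lambda:=2^{nk}|x-y|$. For the common setup I would write $\psi'=\phi'\tilde{P}_k+\phi\tilde{P}_k'=\phi'\tilde{P}_k'[Q+\Upsilon]$ with $Q:=\tilde{P}_k/\tilde{P}_k'$ and $\Upsilon:=\phi/\phi'$. Lemma~\ref{le2.2} gives $\Upsilon\leq 2/C_1$ with $\Upsilon$ increasing (so $\Upsilon'\geq 0$), hence $\phi'/\phi\geq C_1/2$; moreover, Cauchy's mean value theorem applied to the pair $(t\gamma'(t),\gamma(t))$, which share a common zero at $0$, together with condition~(iii) yields $t\gamma'(t)/\gamma(t)\geq 1+C_1$, from which one deduces $\phi(z)\geq(1+C_1)\Lambda$ and $\phi'(z)\gtrsim\Lambda$ on $1\leq z-y<z-x\leq 2$.

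For Case~1, the hypothesis $|Q|>4/C_1$ yields $|\tilde{P}_k'/\tilde{P}_k|<C_1/4$; combined with $\phi'/\phi\geq C_1/2$ this gives $|\psi'/\psi|\geq C_1/4$, hence $|\psi'(z)|\geq(C_1/4)\phi(z)|\tilde{P}_k(z)|$. Lemma~\ref{le2.3} supplies $|\tilde{P}_k'(z)|\geq c_n\delta^{n-1}$ whenever $z$ lies at distance at least $\delta$ from the set $U$ of real zeros of $\tilde{P}_k'$ and $\tilde{P}_k''$, and the Case~1 hypothesis upgrades this to $|\tilde{P}_k(z)|\gtrsim_n\delta^{n-1}$, so $|\psi'(z)|\gtrsim_n\Lambda\delta^{n-1}$ on the good set $I_\delta:=\{z\in(y+1,r):\mathrm{dist}(z,U)\geq\delta\}$. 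The bad set has measure $\lesssim_n\delta$ and is handled by the trivial bound. On each of the $O_n(1)$ connected components of $I_\delta$, integration by parts gives $|\int e^{i\psi}\,dz|\leq 2/\inf|\psi'|+\int|\psi''|/|\psi'|^2\,dz$, and the total-variation integral is bounded via the factorization $1/\psi'=(\phi\tilde{P}_kS)^{-1}$ with $S:=\phi'/\phi+\tilde{P}_k'/\tilde{P}_k$ (which has constant sign and satisfies $|S|\geq C_1/4$), using the product rule for total variation together with Lemma~\ref{le2.5} to convert the $O_n(1)$ count of sign changes of $\psi''$ into a uniform constant; the net bound is $\lesssim_n 1/(\Lambda\delta^{n-1})$, so choosing $\delta=\Lambda^{-1/n}$ yields $|J^r|\lesssim_n\Lambda^{-1/n}$.

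For Case~2, the hypothesis $Q'>1/(8n)$ combined with $\Upsilon'\geq 0$ gives $R:=Q+\Upsilon$ strictly increasing with $R'>1/(8n)$, so $R$ has at most one zero $z_R$ per connected component, with $|R(z)|\geq|z-z_R|/(8n)\geq\delta/(8n)$ off the $\delta$-neighborhood of $z_R$. Combined with $\phi'(z)\gtrsim\Lambda$ and $|\tilde{P}_k'(z)|\geq c_n\delta^{n-1}$ off the $\delta$-neighborhood of $U$ (Lemma~\ref{le2.3}), this produces $|\psi'(z)|=\phi'(z)|\tilde{P}_k'(z)||R(z)|\gtrsim_n\Lambda\delta^n$ on the corresponding good set. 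The bad set again has measure $\lesssim_n\delta$, and the same integration-by-parts scheme (now with the factorization $1/\psi'=1/(\phi'\tilde{P}_k'R)$) gives $\lesssim_n 1/(\Lambda\delta^n)$ on the good set; optimizing $\delta=\Lambda^{-1/(n+1)}$ yields $|J^r|\lesssim_n\Lambda^{-1/(n+1)}$.

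The main obstacle is the total-variation estimate for $1/\psi'$ with a constant depending only on $n$ and $C_1$. This relies on condition~(iv): the MVT identity $\phi''(z)\propto\gamma''(\omega 2^k(z-x))-\gamma''(\omega 2^k(z-y))$ combined with the monotonicity of $\gamma''$ forces $\phi''$ to have constant sign, so $\phi'$ is monotone and $1/\phi'$ has bounded total variation on each component. Together with the polynomial structure of $\tilde{P}_k$ (each of $\tilde{P}_k,\tilde{P}_k',\tilde{P}_k''$ has $O_n(1)$ sign changes) and Lemma~\ref{le2.5}, one then extracts a uniform bound on $\mathrm{TV}(1/\psi')$. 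A further check is that the decompositions into good sets and into the Case~2 subset of $(y+1,r)\setminus E_k$ produce only $O_n(1)$ connected components, so that the hidden constants remain independent of the coefficients of $P$.
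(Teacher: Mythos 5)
Your proposal follows essentially the same route as the paper's proof: the same $\delta$-neighborhood good/bad splitting around the zeros of $\tilde{P}_k'$ and $\tilde{P}_k''$ (plus the single zero of $Q+\Upsilon$ per component in Case 2), the same lower bounds $|\psi'(z)|\gtrsim 2^{nk}|x-y|\,\delta^{n-1}$ resp.\ $2^{nk}|x-y|\,\delta^{n}$ obtained from Lemmas \ref{le2.2} and \ref{le2.3} together with condition (iii), the same van der Corput integration by parts with the total variation of $1/\psi'$ controlled via Lemma \ref{le2.5} and condition (iv), and the same optimization in $\delta$. The one loose phrase is the ``$O_n(1)$ count of sign changes of $\psi''$'': $\psi''$ itself mixes $\phi'',\phi',\phi$ with the polynomial factors and need not have boundedly many sign changes, so the variation bound must be carried out separately on the factors $1/\phi'$ (monotone by condition (iv)), $1/\tilde{P}_k'$ and $(Q+\Upsilon)^{-1}$ — exactly as your final paragraph indicates and as the paper does in its $J_{1b}$ and $J_2$ terms.
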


\begin{proof}We recall that for fixed $x,y\in \mathbb{R}$, and for any $z\in \mathbb{R}$ satisfies $1\leq z-y < z-x \leq 2$,
$$\phi(z)=2^{nk} \frac{\gamma (\omega 2^k)}{\gamma (\omega)}  \left(\frac{\gamma (\omega 2^k(z-x))}{\gamma (\omega 2^k)} -\frac{\gamma (\omega 2^k(z-y))}{\gamma (\omega 2^k)} \right),$$
$$\psi(z)=\phi(z)\tilde{P}_k(z).$$
It is easy to obtain
$$\phi'(z)=2^{nk} \frac{\gamma (\omega 2^k)}{\gamma (\omega)}  \left(\frac{\omega 2^k\gamma '(\omega 2^k(z-x))}{\gamma (\omega 2^k)} -\frac{\omega 2^k\gamma' (\omega 2^k(z-y))}{\gamma (\omega 2^k)} \right)$$
and
$$\psi'(z)=\phi(z)\tilde{P}_k'(z)+\phi'(z)\tilde{P}_k(z).$$

It is nature that the critical points of the phase function in $J^r$ will be the obstacle to obtain the estimates \eqref{eq:4.2} and \eqref{eq:4.3}. For this reason, we introduce the following set $\Delta$. In this set there is no critical points of the phase function and its extra set is not too large.
Let $U$ be the union of the set of roots of $\tilde{P}_k'$ and of $\tilde{P}_k''$ over $\mathbb{R}$. For fixed $x,y\in \mathbb{R}$, for any $r\in \mathbb{R}$ satisfies $1\leq r-y < r-x \leq 2$ and any $\delta >0$, let
$$\Delta:=\{z\in(y+1,r):\  z \notin  E_k \,\textrm{and}\, \textrm{dist}(z, U)>\delta   \} . $$
Then
$$|(y+1, r)\setminus E_k\setminus \Delta|\leq C_{(n)}\delta$$
and
\begin{align}\label{eq:4.15}
|J^r|
   =& \left| \int_{(y+1,r)\backslash E_k} e^{i\psi(z)}\,\textrm{d}z\right|\\
 =& \left | \int_{\Delta} e^{i\psi(z)}\,\textrm{d}z+ \int_{(y+1,r)\backslash E_k \backslash \Delta} e^{i\psi(z)}\,\textrm{d}z\right|\nonumber\\
  \leq& \left | \int_{\Delta} e^{i\psi(z)}\,\textrm{d}z\right |+ \left|\int_{(y+1,r)\backslash E_k \backslash \Delta} e^{i\psi(z)}\,\textrm{d}z\right|.\nonumber
\end{align}
The last integral then is controlled by $C_{(n)}\delta$. It suffices to estimate
$$\left | \int_{\Delta} e^{i\psi(z)}\,\textrm{d}z\right |.$$

{\bf Case 1} $\left | \frac{\tilde{P}_k(z)}{\tilde{P}_k'(z)}\right |> \frac{4}{C_1}$ .
For $z\in \Delta$, we can write
\begin{align}\label{eq:4.11}
\frac{\psi'(z)}{\tilde{P}_k'(z)\phi'(z)}=\frac{\gamma(\omega 2^k (z-x))-\gamma(\omega 2^k(z-y))}{\omega 2^k\gamma'(\omega 2^k(z-x))-\omega 2^k\gamma'(\omega 2^k(z-y))}+\frac{\tilde{P}_k(z)}{\tilde{P}_k'(z)}.
\end{align}
By Lemma \ref{le2.2}, we get
\begin{eqnarray*}
% \nonumber to remove numbering (before each equation)
 \left| \frac{\psi'(z)}{\tilde{P}_k'(z)\phi'(z)}\right|
   &\geq& \left|\frac{\tilde{P}_k(z)}{\tilde{P}_k'(z)}\right|- \left| \frac{\gamma(\omega 2^k (z-x))-\gamma(\omega 2^k(z-y))}{\omega 2^k\gamma'(\omega 2^k(z-x))-\omega 2^k\gamma'(\omega 2^k(z-y))}\right|\\
    &\geq&  \frac{4}{C_1}-\frac{2}{C_1}
     = \frac{2}{C_1}.
\end{eqnarray*}

On the other hand, by the generalised mean value theorem, $\gamma$ and $\gamma'$ are increasing on $(0,\infty)$, which yields $\frac{t\gamma'(t)}{\gamma(t)}\geq 1$ for any $t\in (0,\infty)$, and
\begin{eqnarray*}
% \nonumber to remove numbering (before each equation)
 |\phi'(z)|
   &=& \left|2^{nk} \frac{\gamma (\omega 2^k)}{\gamma (\omega)}  \left(\frac{\omega 2^k\gamma '(\omega 2^k(z-x))}{\gamma (\omega 2^k)} -\frac{\omega 2^k\gamma' (\omega 2^k(z-y))}{\gamma (\omega 2^k)} \right)\right|\\
     &=& 2^{nk} \frac{\gamma (\omega 2^k)}{\gamma (\omega)} \frac{\omega 2^k}{\gamma (\omega 2^k)} \gamma ''(\omega 2^k\theta_2)\omega 2^k|x-y|\\
     &=& 2^{nk} \frac{\gamma (\omega 2^k)}{\gamma (\omega)} \frac{\omega 2^k\gamma' (\omega 2^k)}{\gamma (\omega 2^k)} \frac{\omega 2^k\theta_2\gamma ''(\omega 2^k\theta_2)}{\gamma' (\omega 2^k\theta_2)}\frac{1}{\theta_2}\frac{\gamma' (\omega 2^k\theta_2) }{\gamma' (\omega 2^k)}|x-y|\\
     &\gs& 2^{nk}|x-y|.
\end{eqnarray*}
where $1\leq z-y\leq \theta_2 \leq z-x\leq2$.
And Lemma \ref{le2.3} leads to
\begin{align}\label{eq:4.12}
|\psi'(z)|\gs 2^{nk}|x-y| \delta^{n-1}.
\end{align}
Then
\begin{eqnarray*}
% \nonumber to remove numbering (before each equation)
\int_{\Delta}e^{i\psi(z)}\,\textrm{d}z
   &=& \int_{\Delta}   \frac{\textrm{d}}{\textrm{d}z}  \left( \frac{e^{i\psi(z)}}{\tilde{P}_k'(z)\phi'(z)}\right)  \frac{\tilde{P}_k'(z)\phi'(z)}{i\psi'(z)}\,\textrm{d}z +\int_{\Delta}  \frac{ e^{i\psi(z)}(\tilde{P}_k'(z)\phi'(z))' }{(\tilde{P}_k'(z)\phi'(z))^2}  \frac{\tilde{P}_k'(z)\phi'(z)}{i\psi'(z)}\,\textrm{d}z \\
    &=:& J_1+J_2.
\end{eqnarray*}
And
\begin{eqnarray*}
% \nonumber to remove numbering (before each equation)
J_1
   &=&  \int_{\Delta}   \frac{\textrm{d}}{\textrm{d}z}  \left( \frac{e^{i\psi(z)}}{\tilde{P}_k'(z)\phi'(z)}\right)   \frac{\tilde{P}_k'(z)\phi'(z)}{i\psi'(z)}\,\textrm{d}z\\
    &=&  \left[ \frac{e^{i\psi(z)}}{i\psi'(z)} \right]_{\partial \Delta}- \int_{\Delta}  \frac{e^{i\psi(z)}}{\tilde{P}_k'(z)\phi'(z)}    \frac{\textrm{d}}{\textrm{d}z}  \left( \ \frac{\tilde{P}_k'(z)\phi'(z)}{i\psi'(z)}\right)  \, \textrm{d}z\\
    &=:& J_{1a}+J_{1b}.
\end{eqnarray*}
From \eqref{eq:4.12}, it follows that
$$|J_{1a}|\lesssim \frac{1}{|x-y|} \delta ^{1-n}2^{-nk}.$$
We have $ \left | \frac{\tilde{P}_k'(z)}{\tilde{P}_k(z)}\right |< \frac{C_1}{4}$ , by Lemma 2.2,
\begin{eqnarray*}
% \nonumber to remove numbering (before each equation)
\left|\frac{\psi'(z)}{\tilde{P}_k(z)\phi'(z)}\right|
    &=& \left| \frac{\gamma(\omega 2^k (z-x))-\gamma(\omega 2^k(z-y))}{\omega 2^k\gamma'(\omega 2^k(z-x))-\omega 2^k\gamma'(\omega 2^k(z-y))}\frac{\tilde{P}_k'(z)}{\tilde{P}_k(z)}+1 \right|\\
    &\geq& 1- \left| \frac{\gamma(\omega 2^k (z-x))-\gamma(\omega 2^k(z-y))}{\omega 2^k\gamma'(\omega 2^k(z-x))-\omega 2^k\gamma'(\omega 2^k(z-y))}\frac{\tilde{P}_k'(z)}{\tilde{P}_k(z)} \right|\\
     &\geq& 1- \frac{2}{C_1}\frac{C_1}{4}
     = \frac{1}{2}.
\end{eqnarray*}
From $ \frac{\textrm{d}}{\textrm{d}z} \frac{\tilde{P}_k(z)}{\tilde{P}_k'(z)}=\frac{\tilde{P}_k'(z)\tilde{P}_k'(z)-\tilde{P}_k(z)\tilde{P}_k''(z)}{\tilde{P}_k'(z)\tilde{P}_k'(z)}$, it implies $\frac{\textrm{d}}{\textrm{d}z} \frac{\tilde{P}_k(z)}{\tilde{P}_k'(z)}$ has at most $2(n-1)$ roots. Combine Lemma \ref{le2.5},
\begin{eqnarray*}
% \nonumber to remove numbering (before each equation)
|J_{1b}|
   &\leq&  \int_{\Delta}  \left|\frac{e^{i\psi(z)}}{\tilde{P}_k'(z)\phi'(z)}    \frac{\textrm{d}}{\textrm{d}z}  \left( \ \frac{\tilde{P}_k'(z)\phi'(z)}{i\psi'(z)}\right) \right| \, \textrm{d}z\\
    &\lesssim&   \frac{1}{|x-y|} 2^{-nk}\delta ^{1-n}\int_{\Delta} \left| \frac{\textrm{d}}{\textrm{d}z}  \left( \ \frac{\tilde{P}_k'(z)\phi'(z)}{\psi'(z)}\right) \right| \, \textrm{d}z\\
    &\approx&  \frac{1}{|x-y|} 2^{-nk}\delta ^{1-n}\int_{\Delta} \left|  \left(\frac{\tilde{P}_k'(z)\phi'(z)}{\psi'(z)} \right)^2   \frac{\textrm{d}}{\textrm{d}z}  \left( \Upsilon(z)+\frac{\tilde{P}_k(z)}{\tilde{P}_k'(z)}\right) \right| \, \textrm{d}z\\
    &\lesssim&  \frac{1}{|x-y|} 2^{-nk}\delta ^{1-n}  \left(\int_{\Delta}  \left|    \frac{\textrm{d}}{\textrm{d}z} \Upsilon(z)\right| \textrm{d}z + \int_{\Delta} \left(\frac{\tilde{P}_k'(z)\phi'(z)}{\psi'(z)} \right)^2 \left|  \frac{\textrm{d}}{\textrm{d}z} \frac{\tilde{P}_k(z)}{\tilde{P}_k'(z)} \right|  \,\textrm{d}z \right)\\
    &\lesssim&  \frac{1}{|x-y|}2^{-nk} \delta ^{1-n}  \left(1 + \int_{\Delta}\left(\frac{\tilde{P}_k(z)\phi'(z)}{\psi'(z)} \right)^2 \left|  \frac{\textrm{d}}{\textrm{d}z} \frac{\tilde{P}_k'(z)}{\tilde{P}_k(z)} \right|  \,\textrm{d}z \right)\\
    &\lesssim&  \frac{1}{|x-y|}2^{-nk} \delta ^{1-n}  \left(1 + \int_{\Delta} \left|  \frac{\textrm{d}}{\textrm{d}z} \frac{\tilde{P}_k'(z)}{\tilde{P}_k(z)} \right| \, \textrm{d}z \right)\\
    &\lesssim&  \frac{1}{|x-y|}2^{-nk} \delta ^{1-n}.
\end{eqnarray*}
From $ \frac{\textrm{d}}{\textrm{d}z} \frac{1}{\tilde{P}_k'(z)}=-\frac{\tilde{P}_k''(z)}{\tilde{P}_k'(z)\tilde{P}_k'(z)}$, this implies $\frac{\textrm{d}}{\textrm{d}z} \frac{1}{\tilde{P}_k'(z)}$ has at most $n-2$ roots. From $ \frac{\textrm{d}}{\textrm{d}z} \frac{1}{\phi'(z)}=-\frac{\phi''(z)}{\phi'(z)\phi'(z)}$ and $\phi''(z)=2^{nk} \frac{(\omega 2^k)^2}{\gamma (\omega)}  \left(\gamma ''(\omega 2^k(z-x)) -\gamma ''(\omega 2^k(z-y)) \right)$, noticing $\gamma''$ is monotone on $(0,\infty)$, we may get that the sign of $\frac{\textrm{d}}{\textrm{d}z} \frac{1}{\phi'(z)}$ does not change on this domain. From Lemma 2.5 and together with $|\frac{1}{\tilde{P}_k'(z)}|\lesssim\delta ^{1-n} $ and $|\frac{1}{\phi'(z)}|\lesssim\frac{1}{|x-y|}2^{-nk} $ enable us to obtain
\begin{eqnarray*}
% \nonumber to remove numbering (before each equation)
|J_2|
   &\leq& \int_{\Delta} \left|  \frac{ e^{i\psi(z)}(\tilde{P}_k'(z)\phi'(z))' }{(\tilde{P}_k'(z)\phi'(z))^2}  \frac{\tilde{P}_k'(z)\phi'(z)}{i\psi'(z)} \right|   \,\textrm{d}z\\
    &\lesssim&   \int_{\Delta} \left|  \frac{ (\tilde{P}_k'(z)\phi'(z))' }{(\tilde{P}_k'(z)\phi'(z))^2}  \right| \,  \textrm{d}z\\
    &\lesssim&   \int_{\Delta} \left|  \frac{ \tilde{P}_k''(z) }{(\tilde{P}_k'(z))^2\phi'(z)}  \right|   \textrm{d}z+\int_{\Delta} \left|  \frac{ \phi''(z) }{\tilde{P}_k'(z)(\phi'(z))^2}  \right|   \,\textrm{d}z\\
     &\lesssim&  \frac{1}{|x-y|} 2^{-nk}  \int_{\Delta} \left|  \frac{\textrm{d}}{\textrm{d}z} \left(\frac{1}{\tilde{P}_k'(z)}\right) \right|   \textrm{d}z+ \delta ^{1-n}\int_{\Delta} \left|  \frac{\textrm{d}}{\textrm{d}z} \left(\frac{1}{\phi'(z)}\right)  \right|  \, \textrm{d}z\\
    &\lesssim&  \frac{1}{|x-y|} \delta ^{1-n}2^{-nk} .
\end{eqnarray*}

Therefore,

\begin{align}\label{eq:4.14}
\left|\int_{\Delta}e^{i\psi(z)}\,\textrm{d}z\right|\leq |J_{1a}|+|J_{1b}|+|J_2|\lesssim \frac{1}{|x-y|} \delta ^{1-n}2^{-nk} .
\end{align}

Let $\frac{1}{|x-y|} \delta ^{1-n}2^{-nk}\approx  C_{(n)}\delta$, then we obtain \eqref{eq:4.2}.

{\bf Case 2} $\left(\frac{\tilde{P}_k}{\tilde{P}_k'}\right)'(z)> \frac{1}{8n} $.
 We denote $\Delta:=\bigcup_{m=1}^{C_{(n)}}(a_m,b_m)$. On each $(a_m,b_m)$, by Lemma \ref{le2.2}, the derivative of $\frac{\psi'(z)}{\tilde{P}_k'(z)\phi'(z)}$ is greater than $\frac{1}{8n}$. Then $\frac{\psi'(z)}{\tilde{P}_k'(z)\phi'(z)}$ is strictly increasing on this domain.
 If it has one (and only one) zero in this interval, we denote it as $z_m\in (a_m, b_m)$. By mean value theorem,
\begin{equation}\label{eq:4.20}\left|\frac{\psi'(z)}{\tilde{P}_k'(z)\phi'(z)}\right|\geq \frac{1}{8n} |z-z_m|.
\end{equation}
  If $\frac{\psi'(z)}{\tilde{P}_k'(z)\phi'(z)}$ has no zero in $\left(a_m,b_m\right)$, we consider two cases: If $\frac{\psi'(z)}{\tilde{P}_k'(z)\phi'(z)}>0$ on $\left(a_m,b_m\right)$, we take $z_m$ as the intersection between $z$-axis and the tangent line to the function $\frac{\psi'(z)}{\tilde{P}_k'(z)\phi'(z)}$ at $z=a_m$; If $\frac{\psi'(z)}{\tilde{P}_k'(z)\phi'(z)}<0$ on $\left(a_m,b_m\right)$, we take $z_m$  as the intersection between $z$-axis and the tangent line to the function $\frac{\psi'(z)}{\tilde{P}_k'(z)\phi'(z)}$ at $z=b_m$. Then, we obtain a series $\{z_m\}_{m=1}^{C_{(n)}}$ such that, for each $m\in[1,C_{(n)}]$ and $m\in \mathbb{Z}$, we have \eqref{eq:4.20} is true.

Let $B_\delta:=\left\{z\in \Delta:\ \textrm{dist}\left(z, \bigcup_{m=1}^{C_{(n)}} \left\{z_m\right\}\right)\leq\delta\right\}$ and $D:=\Delta \setminus B_\delta$. We know that $|B_\delta|\leq C_{(n)}\delta$ and $D$ consists of $C_{(n)}$ intervals. Then we focus our goal to estimate
$$\left|\int_{D}e^{i\psi(z)}\,\textrm{d}z\right|.$$

For $z\in D$, as in Case 1,
\begin{align}\label{eq:4.13}
|\psi'(z)|\gs 2^{nk}|x-y| \delta^n.
\end{align}
Replacing the $\Delta$ in \eqref{eq:4.14} by $D$ and running the same argument, we can see that $J_{1a}\lesssim \frac{1}{|x-y|}2^{-nk} \delta ^{-n}$ and $J_2\lesssim \frac{1}{|x-y|}2^{-nk}\delta ^{-n}$. For $J_{1b}$, noticing $\frac{\psi'(z)}{\tilde{P}_k'(z)\phi'(z)}$ is increasing on $D$, we have
\begin{eqnarray*}
% \nonumber to remove numbering (before each equation)
|J_{1b}|
   &\leq&  \int_{D}  \left|\frac{e^{i\psi(z)}}{\tilde{P}_k'(z)\phi'(z)}    \frac{\textrm{d}}{\textrm{d}z}  \left( \ \frac{\tilde{P}_k'(z)\phi'(z)}{i\psi'(z)}\right) \right| \, \textrm{d}z\\
    &\lesssim&   \frac{1}{|x-y|}2^{-nk} \delta ^{1-n}\int_{D} \left| \frac{\textrm{d}}{\textrm{d}z}  \left( \ \frac{\tilde{P}_k'(z)\phi'(z)}{\psi'(z)}\right) \right|  \,\textrm{d}z\\
    &\approx&  \frac{1}{|x-y|} 2^{-nk}\delta ^{1-n}\left|\int_{D}  \frac{\textrm{d}}{\textrm{d}z}  \left( \ \frac{\tilde{P}_k'(z)\phi'(z)}{\psi'(z)}\right)  \textrm{d}z\right| \\
    &\lesssim&  \frac{1}{|x-y|}2^{-nk} \delta ^{-n}.
\end{eqnarray*}
Therefore,
\begin{align}\label{eq:4.16}
\left|\int_{D}e^{i\psi(z)}\,\textrm{d}z\right|\leq |J_{1a}|+|J_{1b}|+|J_2|\lesssim \frac{1}{|x-y|} \delta ^{-n}2^{-nk} .
\end{align}
As in \eqref{eq:4.15}, $|J^r|$ can be controlled by
\begin{align}\label{eq:4.17}
|J^r|\lesssim \left(\frac{1}{2^{nk}|x-y|}\right)^{\frac{1}{n+1}}.
\end{align}
This is \eqref{eq:4.3}, we finish the proof of Propostion \ref{pro:4.1}.

\end{proof}

\bigskip

\noindent  Junfeng Li and Haixia Yu (Corresponding author)

\smallskip

\noindent  Laboratory of Mathematics and Complex Systems
(Ministry of Education of China),
School of Mathematical Sciences, Beijing Normal University,
Beijing 100875, People's Republic of China

\smallskip

\noindent {\it E-mails}: \texttt{lijunfeng@bnu.edu.cn} (J. Li)

\noindent\phantom{{\it E-mails:}} \texttt{yuhaixia@mail.bnu.edu.cn} (H. Yu)

\bigskip


\begin{thebibliography}{10}

\bibitem{B1} M. Bateman, Single annulus $L^p$ estimates for Hilbert transforms along vector fields, Rev. Mat. Iberoam. 29 (2013), no. 3, 1021-1069.

\vspace{-.3cm}

\bibitem{B2} M. Bateman and C. Thiele, $L^p$ estimates for the Hilbert transforms along a one-variable vector field, Anal. PDE 6 (2013), no. 7, 1577-1600.

\vspace{-.3cm}

\bibitem{BJ} J. M. Bennett, Hilbert transforms and maximal functions along variable flat curves, Trans. Amer. Math. Soc. 354 (2002), no. 12, 4871-4892.

\vspace{-.3cm}

\bibitem{BJM} J. M. Bennett, Oscillatory singular integrals with variable flat phases and related operators, Ph.D.thesis, Edinbrugh University (1998).

\vspace{-.3cm}

\bibitem{BN} N. Bez, $L^p$-boundedness for the Hilbert transform and maximal operator along a class of nonconvex curves, Proc. Amer. Math. Soc. 135 (2007), no. 1, 151-161.

\vspace{-.3cm}

\bibitem{Chr} M. Christ, Hilbert transforms along curves. II. A flat case, Duke Math. J. 52 (1985), no. 4, 887-894.

\vspace{-.3cm}

\bibitem{CNSW} M. Christ, A. Nagel, E. M. Stein and S. Wainger, Singular and maximal Radon transforms: analysis and geometry, Annals Math. (2) 150 (1999), no. 2, 489-577.

\vspace{-.3cm}

\bibitem{CCCD} H. Carlsson, M. Christ, A. Cordoba, J. Duoandikoetxea, J. L. Rubio de Francia, J. Vance, S. Wainger and D. Weinberg, $L^p$ estimates for maximal functions and Hilbert transforms along flat convex curves in $\mathbb{R}^{2}$, Bull. Amer. Math. Soc. (N.S.) 14 (1986), no. 2, 263-267.

\vspace{-.3cm}

\bibitem{CCVWW} A. Carbery, M. Christ, J. Vance, S. Wainger and D. Watson, Operators associated to flat plane curves: $L^p$ estimates via dilation methods, Duke Math. J. 59 (1989), no. 3, 675-700.

\vspace{-.3cm}

\bibitem{CP} A. Carbery and S. P¨¦rez, Maximal functions and Hilbert transforms along variable flat curves, Math. Res. Lett. 6 (1999), no. 2, 237-249.

\vspace{-.3cm}

\bibitem{CR} A. Cordoba and J. L. Rubio de Francia, Estimates for Wainger's singular integrals along curves, Rev. Mat. Iberoam. 2 (1986), no. 1-2, 105-117.

\vspace{-.3cm}

\bibitem{CSWW} A. Carbery, A. Seeger, S. Wainger and J. Wright, Classes of singular integral operators along variable lines, J. Geom. Anal. 9 (1999), no. 4, 583-605.

\vspace{-.3cm}

\bibitem{CVWW} A. Carbery, J. Vance, S. Wainger and D. Watson, The Hilbert transform and maximal function along flat curves, dilations, and differential equations, Amer. J. Math. 116 (1994), no. 5, 1203-1239.

\vspace{-.3cm}

\bibitem{CWW} A. Carbery, S. Wainger and J. Wright, Hilbert transforms and maximal functions along variable flat plane curves, J. Fourier Anal. Appl. Special Issue (1995), 119-139.

\vspace{-.3cm}

\bibitem{CZ} A. Carbery and S. Ziesler, Hilbert transforms and maximal functions along rough flat curves, Rev. Mat. Iberoam. 10 (1994), no. 2, 379-393.

\vspace{-.3cm}

\bibitem{CZx} J. Chen and X. Zhu, $L^2$-boundedness of Hilbert transforms along variable curves, J. Math. Anal. Appl. 395 (2012), no. 2, 515-522.

\vspace{-.3cm}

\bibitem{G1} S. Guo, Remarks on the maximal operator and Hilbert transform along variable parabolas, arXiv:\ 1505.00229.

\vspace{-.3cm}

\bibitem{G2} S. Guo, J. Hickman, V. Lie and J. Roos, Maximal operators and Hilbert transforms along variable non-flat homogeneous curves, Proc. Lond. Math. Soc. (3) 115 (2017), no. 1, 177-219.

\vspace{-.3cm}

\bibitem{LL1} M. Lacey and X. Li, Maximal theorems for the directional Hilbert transform on the plane, Trans. Amer. Math. Soc. 358 (2006), no. 9, 4099-4117.

\vspace{-.3cm}

\bibitem{LL2} M. Lacey and X. Li, On a conjecture of E. M. Stein on the Hilbert transform on vector fields, Mem. Amer. Math. Soc. 205 (2010), no. 965.

\vspace{-.3cm}

\bibitem{NVWW} A. Nagel, J. Vance, S. Wainger and D. Weinberg, Hilbert transforms for convex curves, Duke Math. J. 50 (1983), no. 3, 735-744.

\vspace{-.3cm}

\bibitem{NW} A. Nagel and S. Wainger, Hilbert transforms associated with plane curves, Trans. Amer. Math. Soc. 223 (1976), 235-252.

\vspace{-.3cm}

\bibitem{PS} D. H. Phong and E. M. Stein, Hilbert integrals, singular integrals, and Radon transforms. I, Acta Math. 157 (1986), no. 1-2, 99-157.

\vspace{-.3cm}

\bibitem{RS} F. Ricci and E. M. Stein, Harmonic analysis on nilpotent groups and singular integrals. I. Oscillatory integrals, J. Funct. Anal. 73 (1987), no. 1, 179-194.

\vspace{-.3cm}

\bibitem{S} A. Seeger, $L^2$-estimates for a class of singular oscillatory integrals, Math. Res. Lett. 1 (1994), no. 1, 65-73.

\vspace{-.3cm}

\bibitem{Stein} E. M. Stein, Problems in Harmonic Analysis related to Curvature and oscillatory integrals, in: Proceeding of the International Congress of Mathematicians, Vol. 1,2 (Berkeley, Calif. 1986), 1987, 196-221.

\vspace{-.3cm}

\bibitem{SS} E. M. Stein and B. Street, Multi-parameter singular Radon transforms III: Real analytic surfaces, Adv. Math. 229 (2012), 2210-2238.

\vspace{-.3cm}

\bibitem{SW} E. M. Stein and S. Wainger, Problems in harmonic analysis related to curvature, Bull. Amer. Math. Soc. 84 (1978), no. 6, 1239-1295.

\vspace{-.3cm}

\bibitem{VWW} J. Vance, S. Wainger and J. Wright, The Hilbert transform and maximal function along nonconvex curves in the plane, Rev. Mat. Iberoam. 10 (1994), no. 1, 93-121.

\vspace{-.3cm}

\bibitem{W} J. Wright, $L^p$ estimates for operators associated to oscillating plane curves, Duke Math. J. 67 (1992), no. 1, 101-157.

\vspace{-.3cm}

\bibitem{ZS} S. Ziesler, $L^p$-boundedness of the Hilbert transform and maximal function associated to flat plane curves, Proc. Amer. Math. Soc. 122 (1994), no. 4, 1035-1043.



\end{thebibliography}
\end{document}